\documentclass[12pt]{amsart}
\usepackage{amscd}
\usepackage{amssymb,amsmath}
\usepackage{hyperref}
\usepackage{mathrsfs}
\usepackage{graphicx}
\usepackage{enumitem}
\usepackage{romannum}
\usepackage{mathtools}
\usepackage[utf8]{inputenc}

\usepackage{tikz}
\usetikzlibrary{arrows,decorations.pathmorphing,backgrounds,positioning,fit}
\usetikzlibrary{positioning}
\hypersetup{
colorlinks=true,
linkcolor=blue,}

\usetikzlibrary{trees}
\usetikzlibrary{arrows}

\def\supp{\operatorname{supp}}

\def\reg{\operatorname{reg}}

\def\max{\operatorname{max}}

\newcommand{\m}{\mathfrak m}

\newcommand{\NN}{\mathbb{N}}

\newcommand{\PP}{\mathcal{P}}

\newtheorem{lemma}{Lemma}[section]
\newtheorem{corollary}[lemma]{Corollary}
\newtheorem{theorem}[lemma]{Theorem}
\newtheorem{proposition}[lemma]{Proposition}
\newtheorem{definition}[lemma]{Definition}
\newtheorem{remark}[lemma]{Remark}
\newtheorem{example}[lemma]{Example}

\textwidth=16cm
\textheight=21cm
\topmargin=0.5cm
\oddsidemargin=0.05cm
\evensidemargin=0.05cm
\advance\headheight1.15pt

\begin{document}
\pagenumbering{arabic}
	
	\title[Regularity of powers of Cohen-Macaulay forests]{Regularity of power of edge ideal of Cohen-Macaulay weighted oriented forests} 
	
\author[Manohar Kumar]{Manohar Kumar$^*$}
\address{Department of Mathematics, Indian Institute of Technology
		Kharagpur, West Bengal, INDIA - 721302.}
\email{manhar349@gmail.com}
	
\author[Ramakrishna Nanduri]{Ramakrishna Nanduri$^{\dagger}$}
\address{Department of Mathematics, Indian Institute of Technology
		Kharagpur, West Bengal, INDIA - 721302.}
		
	\thanks{$^*$ Supported by PMRF fellowship, India}
	\thanks{$^\dagger$ Supported by SERB grant No: CRG/2021/000465, India}
	\thanks{AMS Classification 2010: 13D02, 05E99, 13D45, 13A30, 05E40}
	\email{nanduri@maths.iitkgp.ac.in}
	
	\maketitle

\begin{abstract}
	In this paper, we explicitly give combinatorial formulas for the regularity of powers of edge ideals, $\reg(I(D)^k)$, of weighted oriented unmixed forests $D$ whose leaves are sinks ($V^+(D)$ are sinks). This combinatorial formula is a piecewise linear function of $k$, for $k \geq 1$. 	
\end{abstract}
	
	\section{Introduction}
	
  A {\em weighted oriented graph} or a {\em digraph} is a graph $D=(V(D), E(D), w)$, where $V(D)$ is the vertex set of $D$, $E(D)=\{(x,y) |\mbox{ there is an edge from vertex $x$ to vertex $y$}\}$ is the {\it edge set} of $D$, and $w:V(D)\rightarrow \NN$ is a map, called weight function. That is, assign a weight $w(x)$ to each vertex $x$ of $D$. If $D$ is a weighted oriented graph, then the underlying simple graph $G$ whose 
  $V(G)=V(D)$ and $E(G)=\{\{x,y\}| (x,y)\in E(D)\}$. That is, $G$ is the simple graph without orientation and weights in $D$. Let $V(D)=\{x_1,\ldots,x_n\}$ and 
  $R=\mathbb{K}[x_1,\ldots,x_n]$, a polynomial ring in $n$ variable $x_1,\ldots,x_n$,  where $\mathbb{K}$ is a field. Then the {\it edge ideal} of $D$ is defined as 
	the ideal $$I(D)=(x_ix_j^{w(x_j)}~|~(x_i,x_j)\in E(D))\subset R.$$
  Studying the edge ideals of weighted oriented graphs is an active area of research at present times. Currently, many authors are studying various algebraic invariants of 
  $I(D)$, namely, Castelnuovo-Mumford regularity, depth, projective dimension, etc. 
  Weighted oriented graphs are important because of their applications in coding theory; see \cite{hlmrv19,prt19}. The connection between the algebraic invariants and the combinatorial invariants of a weighted oriented graph is an important problem in combinatorial Commutative Algebra. In particular, describing an algebraic invariant associated to $I(D)$ in terms of certain combinatorial information of $D$ is a challenging problem. It is well known that $\reg(I(D)^k)$ is eventually a linear function of $k$, see \cite{CHT99, K00}. It is very hard to find this linear function and very few is known in the literature.  
  
 Gimenez, Mart\'{i}nez-Bernal, Simis and Villarreal gave a combinatorial characterization of Cohen-Macaulay weighted oriented forests, see \cite[Theorem 5]{gmsv18}. 
 In \cite{prt19}, Pitones, Reyes and Toledo characterized associated primes and the unmixed property of $I(D)$. Also, see \cite{cpr22} for a characterization of unmixedness of certain weighted oriented graphs.   The projective dimension and regularity of $I(D)$ were explicitly computed by Zhu, Xu, Wang, and Tang in \cite{z19}, where $D$ is a weighted rooted forest or an oriented cycle by assuming all the weights of the vertices are at least $2$. The invariants of $I(D)^k$, where $D$ is a weighted oriented gap-free bipartite graph, have explicit combinatorial formulas provided by Zhu, Xu, Wang, and Zhang in \cite{z20}. Also see \cite{w21}. For weighted oriented rooted forests $D$, the regularity and projective dimension of $I(D)^k$ are computed in \cite{x21} by assuming all the weights of the vertices are at least $2$. For $m$-partite weighted oriented graphs, formulas for those invariants are explicitly given in \cite{z21}. In \cite{ck21}, Betti numbers of $I(D)$ are explicitly computed for a class of weighted oriented graphs $D$. Also, see \cite{mp21} to refer formulas for regularity of a class of weighted oriented graphs whose vertices have weights at least $2$. For a weighted oriented path or a cycle whose edges are oriented in one direction, then its regularity was computed in \cite{kblo22}. In this work, we proved that if $D$ is a weighted oriented Cohen-Macaulay forest with all its leaves are sinks, then we explicitly compute combinatorial formulas for regularity of $I(D)^k$ for all $k\geq 1$ and no assumptions on the weights of the vertices. This formula for $\reg(I(D)^k)$ is eventually a linear function of $k$. Our approach does not have any assumptions on the weights.          
  
  Now we give a section-wise description of the paper. In section \ref{sec2}, we recall the definitions and the basic results that we will use in the sequel. In section \ref{sec3}, we compute $\reg(I(D))$, where $D$ is a weighted oriented Cohen-Macaulay forest with all its leaves are sinks (Theorem \ref{thm1}) and we prove some technical lemmas. Finally in section \ref{sec4}, we prove a combinatorial formula for $\reg(I(D)^k)$ for any $k \geq 1$ (Theorem \ref{thm2}).    
	
\section{Preliminaries} \label{sec2}

  In this section, we recall definitions and some results which will be used throughout the paper. Also, we set up notation. \\ 
  Let $D=(V(D), E(D), w)$ be a weighted oriented graph and $V(D)=\{x_1,\ldots,x_n\}$. 
  Let $R=\mathbb{K}[x_1,\ldots,x_n]$, where $\mathbb{K}$ is a field. 
  Let $V^+(D):= \{x\in V(D): w(x)\geq 2\}$ and simple denoted by $V^+$. If $e=(x,y)\in E(D)$, then $x$ is called {\it head} of $e$ and $y$ is called {\it tail} of $e$. For a vertex $x\in V(D)$, its {\it outer neighbourhood} is defined as 
  $N_D(x)^+ := \{y \in V(D) | (x,y)\in E(D)\}$ and its {\it inner neighbourhood} is defined as $N_D(x)^- := \{z \in V(D) | (z,x)\in E(D)\}$ and $N_D[x]:=N_D(x)^+ \cup N_D(x)^- \cup \{x\}$, $N_D[x]^+:=N_D(x)^+\cup \{x\}$, $N_D[x]^-:=N_D(x)^-\cup \{x\}$. A vertex $x\in V(D)$ is called a {\it source} if $N_D(x)^-=\emptyset$ and $x$ is called a {\it sink} if $ N_D(x)^+ = \emptyset $. If $x\in V(D)$ is a source, then we set $w(x)=1$. 
  The degree of a vertex $x\in V(D)$, is defined as $d_D(x) :=|N_D(x)|$. 
  For any $S\subseteq V(D)$, we denote $D\setminus S$, the induced subgraph on the vertex set $ V(D)\setminus S $. 
  For a monomial $f\in R$, define its {\it support} as $\supp(f) := \{x_i : x_i \mid f\}$. We denote $\mathcal{G}(I)=$ minimal set of generators of $I$. For a monomial ideal $I$, its support is defined as $\supp(I):= \displaystyle \cup_{f\in \mathcal{G}(I)} \supp(f)$. 
  
  Recall that two edges $\{x,y\},\{z,u\}\in E(G)$ of a simple graph $G$ are said to be {\it adjacent} if there exists an edge between them, that is there exists an edge $\{a,b\} \in E(G)$, where $a\in \{x,y\}$ and $b\in \{z,u\}$. Two edges in a weighted oriented graph are said to be adjacent if they are adjacent in its underlying graph. We denote $[r]=\{1,\ldots,r\}$, for any positive integer $r$. Recall that a weighted oriented graph is said to be a {\it forest (or tree)} if its underlying graph is a forest(or tree). We say that a weighted oriented graph $D$ is Cohen-Macaulay if $R/I(D)$ is a Cohen-Macaulay ring.   
  
 \begin{definition} For any homogeneous ideal $I$ in $R$, define the Castelnuovo-Mumford regularity (or merely regularity) as 
 \begin{eqnarray*}
 \reg(I) &=&  \max \{j - i \mid \beta_{i,j}(I) \neq 0\} \\
         &=& \max\{j+i \mid H_{\m}^i(I)_j \neq 0\},   
 \end{eqnarray*}
  where $\beta_{i,j}(I)$ is the $(i,j)^{th}$ graded Betti number of $I$ and $H_{\m}^i(I)_j$ denotes the $j^{th}$ graded component of the $i^{t h}$ local cohomology module.  
\end{definition}
\begin{definition}
Suppose $I$ is a monomial ideal such that $I=J+K$, where $\mathcal{G}(I)= \mathcal{G}(J) \cup \mathcal{G}(K)$. Then $I=J+K$ is Betti splitting if
\begin{align*}
 \beta_{i,j}(I) = \beta_{i,j}(J) + \beta_{i,j}(J) + \beta_{i-1,j}(J \cap K)  \mbox{  for all } i,j \geq 0,
\end{align*}
where $\beta_{i-1,j}(J \cap K) = 0 $ if $i=0 .$
\end{definition}

\begin{definition}
Let $m=x_1^{a_1}\cdots x_n^{a_n}$ be a monomial in $R=k[x_1,\ldots,x_n]$. Then the polarization of $m$ is defined to be the squarefree monomial
$$\PP(m)=x_{11}x_{12} \ldots x_{1a_1} x_{21} \ldots x_{2a_2} \ldots x_{n a_n}$$ in the polynomial ring $k[x_{i j} : 1 \leq j \leq a_i,1 \leq i \leq n ].$ If $I \subset R $ is a monomial ideal with $\mathcal{G}(I)=\{m_1, \ldots, m_u\}$ and $m_i=\prod_{j=1}^{n}x_j^{a_{ij}}$ where each $a_{ij} \geq 0$ for $i=1, \ldots, m.$ Then polarization of $I$, denoted by $ I^{\PP}$, is defined as: $$I^{\PP}=(\PP(m_1), \ldots, \PP(m_u)),$$
which is a squarefree monomial ideal in the polynomial ring $R^{\PP}=k[x_{j1},x_{j2}, \ldots, x_{ja_j} \mid j=1, \ldots, n ]$ where $a_j=\max\{a_{i j} \mid i=1, \ldots,m \}$ for any $1 \leq j \leq n.$
 
\end{definition}
The following result conveys that polarization preserves some homological invariants. 
 \begin{lemma}\cite[Corollary 1.6.3]{hh11} \label{lm5}
 Let $I \subset R$ be a monomial ideal and $I^{\PP} \subset R^{\PP}$ its polarization. Then,
 \begin{enumerate}
     \item $\beta_{i,j}(I)=\beta_{i,j}(I^{\PP})$ for all $i$ and $j$,
     \item $\reg(I)=\reg(I^{\PP})$.
 \end{enumerate}
 \end{lemma}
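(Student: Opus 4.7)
The plan is to derive both parts simultaneously by exhibiting $R/I$ as the quotient of $R^{\PP}/I^{\PP}$ by a regular sequence of linear forms. Part (2) will then follow from part (1) because regularity is read off from the Betti table, so the equality of all graded Betti numbers automatically implies the equality of regularities.

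First I would set up the candidate regular sequence. For each variable $x_i$ whose polarization introduces the extra variables $x_{i1},\ldots,x_{ia_i}$, consider the linear forms $\theta_{ij} := x_{ij} - x_{i1}$ for $2 \le j \le a_i$, and let $\Theta$ be the collection of all such $\theta_{ij}$. It is immediate from the definition of $\PP$ that, after identifying $x_{i1}$ with $x_i$, one has a graded isomorphism $R^{\PP}/(I^{\PP} + (\Theta)) \cong R/I$, because modding out by $\Theta$ collapses every $x_{ij}$ to a single variable, and each polarized generator $\PP(m_k)$ then maps back to $m_k$.

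The key step, and the main obstacle, is proving that $\Theta$ is actually a regular sequence on $R^{\PP}/I^{\PP}$. I would handle this one variable at a time: in the one-variable case, one must check that for a squarefree monomial ideal $J \subset S[y_1,\ldots,y_a]$ in which every generator uses at most one $y_j$, each element $y_j - y_1$ is a nonzerodivisor modulo $J$. This is a direct combinatorial check on monomial supports: if $(y_j - y_1)f \in J$ with $f \notin J$, then any squarefree generator of $J$ witnessing $y_j f \in J$ can be matched with one witnessing $y_1 f \in J$ by swapping the role of $y_1$ and $y_j$, contradicting $f \notin J$. Iterating this reduction across all $i$, and being careful that the linear forms introduced at one stage remain nonzerodivisors at the next, yields the full regularity of $\Theta$.

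Once $\Theta$ is known to be a regular sequence of degree-one elements on $R^{\PP}/I^{\PP}$, the standard long exact sequence in $\mathrm{Tor}$ associated to a linear nonzerodivisor preserves all graded Betti numbers; successively quotienting by the $\theta_{ij}$ therefore gives $\beta_{i,j}^{R^{\PP}}(I^{\PP}) = \beta_{i,j}^{R}(I)$ for all $i,j$, which is part (1). Part (2) then follows at once from the Betti-number description of regularity, $\reg(I) = \max\{j - i : \beta_{i,j}(I) \ne 0\}$.
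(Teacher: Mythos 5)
The paper offers no proof of this lemma at all --- it is quoted verbatim from Herzog--Hibi \cite[Corollary 1.6.3]{hh11} --- so the only meaningful comparison is with the argument in that source, and your overall architecture (exhibit $R/I$ as $R^{\PP}/I^{\PP}$ modulo a regular sequence of variable differences, note that a linear nonzerodivisor preserves graded Betti numbers via minimal free resolutions, and read off $\reg$ from the Betti table) is exactly the standard one. The identification $R^{\PP}/(I^{\PP}+(\Theta))\cong R/I$ and the deduction of (2) from (1) are fine.

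The genuine gap is in the one step you yourself flag as the main obstacle. Your reduction asserts that the relevant squarefree ideal $J\subset S[y_1,\dots,y_a]$ has ``every generator uses at most one $y_j$,'' and your swapping argument leans on that. But polarization produces the opposite structure: the generator $\PP(m)$ coming from $x_i^{b}\mid m$ contains the entire prefix $x_{i1}x_{i2}\cdots x_{ib}$, so every generator involving $x_{ij}$ automatically involves $x_{i1}$ as well. Worse, under your stated hypothesis the conclusion is simply false: for $J=(y_1z,\,y_2z')$, each generator uses exactly one $y_j$, yet $(y_2-y_1)zz'=z(y_2z')-z'(y_1z)\in J$ while $zz'\notin J$, so $y_2-y_1$ is a zerodivisor on $S[y_1,y_2]/J$. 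It is precisely the nested--support (prefix) property of polarized generators that rescues the claim: e.g.\ for the squarefree ideal $I^{\PP}$ one checks that no minimal prime can contain both $x_{i1}$ and $x_{ij}$, because any generator forcing $x_{ij}$ into a minimal vertex cover already contains $x_{i1}$. Two further points need repair in a complete write-up: a zerodivisor test against a binomial such as $x_{ij}-x_{i1}$ cannot be run only on monomials $f$ (cancellation between terms is possible), so one should argue via associated primes or via an explicit syzygy computation as in \cite{hh11}; and after the first quotient the image ideal is no longer squarefree, so the later steps of the iteration require the statement for partial polarizations, not just for $I^{\PP}$ itself. With the prefix property correctly used and the induction organized over partial polarizations, the proof closes and coincides with the cited one.
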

\begin{lemma}\cite[Lemma 2.7]{x21} \label{lem5}
Let $S_1=k[x_1,\ldots,x_m]$ and $S_2=k[x_{m+1},\ldots,x_n]$ be two polynomial rings, $I \subset S_1$ and $J \subset S_2$ be two non zero homogeneous ideals. Then,
\begin{enumerate}
    \item $\reg(I+J)=\reg(I)+\reg(J)-1,$
    \item $\reg(I J)=\reg(I)+\reg(J).$
\end{enumerate}
\end{lemma}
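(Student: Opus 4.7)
The plan is to exploit the fact that $I$ and $J$ live in disjoint sets of variables, so the ambient ring $S=k[x_1,\dots,x_n]$ factors as $S=S_1\otimes_k S_2$ and we have $S/(I+J)\cong (S_1/I)\otimes_k (S_2/J)$, together with $I\cap J = IJ$. The whole argument will be driven by a Künneth-type formula for the minimal free resolutions.

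For part (1), I would take $F_\bullet$ to be the minimal graded free resolution of $S_1/I$ over $S_1$ and $G_\bullet$ the minimal graded free resolution of $S_2/J$ over $S_2$. Extending scalars and forming the tensor product complex $F_\bullet\otimes_k G_\bullet$ over $S$, I would verify two things: (a) it is acyclic with $H_0 = (S_1/I)\otimes_k(S_2/J)=S/(I+J)$, using flatness of the polynomial extensions $S_1\hookrightarrow S$ and $S_2\hookrightarrow S$, and (b) it is minimal, because all entries of the differentials lie in the respective homogeneous maximal ideals and therefore in the maximal ideal of $S$. This yields
\[
\beta_{i,j}^S(S/(I+J))=\sum_{\substack{i_1+i_2=i\\ j_1+j_2=j}}\beta_{i_1,j_1}^{S_1}(S_1/I)\,\beta_{i_2,j_2}^{S_2}(S_2/J).
\]
Taking the supremum of $j-i = (j_1-i_1)+(j_2-i_2)$ gives $\reg(S/(I+J))=\reg(S_1/I)+\reg(S_2/J)$, and then using $\reg(I)=\reg(S/I)+1$ for any nonzero proper homogeneous ideal, I would obtain $\reg(I+J)=\reg(I)+\reg(J)-1$.

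For part (2), my preferred route is again a Künneth argument: the minimal graded free resolutions of $I$ over $S_1$ and of $J$ over $S_2$ tensor up to a minimal graded free resolution of $I\otimes_k J$ over $S$, and under disjoint variables the natural map $I\otimes_k J\to IJ\subset S$ is an isomorphism (equivalently $IJ=I\cap J$). This gives the analogous Künneth formula for the Betti numbers of $IJ$ and directly yields $\reg(IJ)=\reg(I)+\reg(J)$. As a sanity check, part (2) can also be deduced from part (1) via the short exact sequence $0\to IJ\to I\oplus J\to I+J\to 0$: the outer regularities are $\reg(I)+\reg(J)-1$ on the right and $\max\{\reg(I),\reg(J)\}$ in the middle, and a short computation with the induced long exact sequence of local cohomology (or of $\Tor$) pins down $\reg(IJ)=\reg(I)+\reg(J)$.

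The only real obstacle is the careful justification of the Künneth formula for minimal resolutions, i.e.\ proving simultaneously that $F_\bullet\otimes_k G_\bullet$ is exact in positive degrees (flatness) and that no cancellation of generators occurs when passing to $S$ (minimality). Both are standard but need to be checked; everything else is bookkeeping with bigraded Betti numbers and the identity $\reg(I)=\reg(S/I)+1$.
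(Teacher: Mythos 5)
Your K\"unneth argument is correct and is essentially the standard proof of this fact; the paper itself offers no proof, quoting the statement directly from \cite[Lemma 2.7]{x21}, so there is nothing to diverge from. The one point worth flagging is your ``sanity check'' for part (2) via the sequence $0\to IJ\to I\oplus J\to I+J\to 0$: the regularity lemma only yields the upper bound $\reg(IJ)\leq \max\{\reg(I\oplus J),\reg(I+J)+1\}=\reg(I)+\reg(J)$, and in the boundary case $\min\{\reg(I),\reg(J)\}=1$ (e.g.\ $I=(x_1)$, $J=(x_2)$) one has $\reg(I+J)=\max\{\reg(I),\reg(J)\}$, so the long exact sequence does not pin down $\reg(IJ)$ from below; the lower bound genuinely requires the tensor-of-resolutions (or an analogous) argument, which your primary route does supply.
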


\begin{lemma}  \cite[Corollary 12]{h14}  \label{lem: hyper}
Let $H$ be a simple hypergraph and $H'$ its induced subhypergraph. Then,
$$\reg(H') \leq  \reg(H).$$
\end{lemma}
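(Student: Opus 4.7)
The plan is to recognize that since $H$ is a simple hypergraph, $I(H)$ is a squarefree monomial ideal in $R = \mathbb{K}[V(H)]$, and hence is the Stanley--Reisner ideal $I_{\Delta}$ of the independence complex $\Delta = \Delta(H)$, whose faces are the independent sets of $H$ (subsets of $V(H)$ containing no edge of $H$). The first step is to verify that for an induced subhypergraph $H'$ on vertex set $V' \subseteq V(H)$, one has $\Delta(H') = \Delta(H)|_{V'}$, the restriction of $\Delta(H)$ to $V'$. This follows by unwinding the definitions: a set $S \subseteq V'$ contains an edge of $H'$ if and only if it contains an edge of $H$, because by ``induced'' the edges of $H'$ are exactly the edges of $H$ contained in $V'$.

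Next I would invoke Hochster's formula for graded Betti numbers of Stanley--Reisner rings:
\[
\beta_{i,i+j}(R/I_{\Delta}) = \sum_{W \subseteq V(H),\; |W|=i+j} \dim_{\mathbb{K}} \tilde{H}_{j-1}(\Delta|_W; \mathbb{K}),
\]
which, combined with $\reg(I(H)) = \reg(R/I(H)) + 1$, yields the purely combinatorial description
\[
\reg(I(H)) = 1 + \max\bigl\{ j \geq 0 : \tilde{H}_{j-1}(\Delta(H)|_W; \mathbb{K}) \neq 0 \text{ for some } W \subseteq V(H) \bigr\}.
\]

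The final step is a direct comparison: writing out the same formula for $I(H')$ inside $R' = \mathbb{K}[V']$, the indexing set $W$ now ranges only over subsets of $V'$, but for any such $W$ the restriction $\Delta(H')|_W$ coincides with $\Delta(H)|_W$ by the identification of Step~1. Thus the maximum defining $\reg(I(H'))$ is taken over a subcollection of the subsets that govern $\reg(I(H))$, which immediately gives $\reg(I(H')) \leq \reg(I(H))$.

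I expect the only (essentially bookkeeping) obstacle to be the careful identification $\Delta(H')|_W = \Delta(H)|_W$ for $W \subseteq V'$, together with matching the indexing conventions in Hochster's formula to the definition of regularity recorded in Section~\ref{sec2}; once these are aligned, the inequality falls out with no further work. An alternative route would be to delete one vertex at a time via the short exact sequence $0 \to (R/(I\!:\!y))(-1) \xrightarrow{\cdot y} R/I \to R/(I+(y)) \to 0$, but controlling $\reg(R/(I\!:\!y))$ inductively is less transparent than the Hochster-formula argument, so I would prefer the latter.
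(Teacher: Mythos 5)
Your proof is correct. The paper gives no proof of this lemma --- it is quoted from \cite[Corollary 12]{h14} --- and the argument in that reference is precisely your Hochster-formula argument (regularity read off from nonvanishing reduced homology of induced subcomplexes of the independence complex, with the subsets $W\subseteq V'$ forming a subcollection of those for $H$), so your proposal coincides with the cited source's proof.
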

The following result follows from Lemma \ref{lem: hyper} and \ref{lm5}.
\begin{lemma} \label{lem: induced}
Let $D'$ be an induced weighted oriented subgraph of $D$. Then,
$$\reg(I(D')^k) \leq  \reg(I(D)^k).$$
\end{lemma}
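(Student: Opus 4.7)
The plan is to reduce the claim to the squarefree/hypergraph setting via polarization and then invoke Lemma \ref{lem: hyper}. By Lemma \ref{lm5}(2) we have $\reg(I(D)^k) = \reg((I(D)^k)^{\PP})$ and $\reg(I(D')^k) = \reg((I(D')^k)^{\PP})$, so it suffices to show the inequality for the polarized (squarefree) ideals, viewing each as the edge ideal of a simple hypergraph whose vertex set consists of the polarization variables and whose edges are the supports of the polarized generators. Call these hypergraphs $H$ (for $I(D)^k$) and $H'$ (for $I(D')^k$). The central step is to identify $H'$ with an induced subhypergraph of $H$, after which Lemma \ref{lem: hyper} finishes the proof.

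To make this identification, I would first observe that since $D'$ is an induced weighted oriented subgraph, every generator $x_i x_j^{w(x_j)}$ of $I(D')$ is literally a generator of $I(D)$: the weight of a non-source $x_j$ is inherited, and any edge $(x_i,x_j)\in E(D)$ with both endpoints in $V(D')$ belongs to $E(D')$. Consequently every minimal generator of $I(D')^k$ is a minimal generator of $I(D)^k$ whose support is contained in $V(D')$; conversely, any generator of $I(D)^k$ whose support lies in $V(D')$ factors into $k$ generators each supported in $V(D')$, which are therefore generators of $I(D')$. Since the polarization $\PP(m)$ of a monomial $m$ depends only on the exponent vector of $m$, the polarized generators of $I(D')^k$ are exactly the polarized generators of $I(D)^k$ whose supports land in the set of variables $\{x_{j,l} : x_j \in V(D')\}$, i.e., exactly the edges of the induced subhypergraph of $H$ on this vertex subset.

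The one delicate point is bookkeeping about the polarization rings: if $a_j$ and $a'_j$ denote the maximum exponents of $x_j$ in generators of $I(D)^k$ and $I(D')^k$ respectively, one may have $a'_j < a_j$ for some $x_j \in V(D')$ (for instance when $x_j$ receives an incoming edge in $D$ but becomes a source in $D'$). I would handle this by noting that the "extra" polarization variables $x_{j,l}$ for $a'_j < l \leq a_j$ are isolated vertices of the induced subhypergraph of $H$ on $\{x_{j,l} : x_j \in V(D')\}$ and hence do not affect its regularity—equivalently, adjoining variables that do not appear in a monomial ideal leaves its regularity unchanged. With this identification the induced subhypergraph of $H$ on the polarization variables of $D'$ coincides with $H'$, Lemma \ref{lem: hyper} gives $\reg(H') \leq \reg(H)$, and chaining with the two equalities from Lemma \ref{lm5} produces the required $\reg(I(D')^k)\le\reg(I(D)^k)$. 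The main obstacle is purely this variable-count bookkeeping; the hypergraph inequality itself is immediate from the cited lemmas.
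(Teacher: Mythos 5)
Your argument is correct and is exactly the route the paper takes: the paper simply asserts that the lemma ``follows from Lemma \ref{lem: hyper} and \ref{lm5}'', i.e.\ polarize, identify $(I(D')^k)^{\PP}$ with the edge ideal of an induced subhypergraph of the one attached to $(I(D)^k)^{\PP}$, and apply the induced-subhypergraph regularity inequality. Your write-up supplies the details the paper omits — in particular the identification $\mathcal{G}(I(D')^k)=\{m\in\mathcal{G}(I(D)^k):\supp(m)\subseteq V(D')\}$ (using that a tail vertex of an edge of $D'$ is not a source, so its weight is not reset) and the harmless discrepancy in polarization variable counts — and both points are handled correctly.
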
 
Recall the following lemma, which we frequently use in many proofs. 
\begin{lemma}\cite[Lemma 1.2]{htt16} [Regularity lemma] \label{lem1}
		Let $0\rightarrow A \rightarrow B \rightarrow C \rightarrow 0$ be a short exact sequence finitely generated graded $R$-modules. Then 
		\begin{enumerate}
			\item $\reg(B) \leq \max\{\reg(A), \reg(C)\}$. 
			\item $\reg(A) \leq \max\{\reg(B), \reg(C)+1\}$.
			\item $\reg(C) \leq \max\{\reg(A)-1, \reg(B)\}$. 
			\item If $\reg(A) > \reg(C)+1$, then $\reg(B)=\reg(A)$. 
			\item If $\reg(C) \geq \reg(A)$, then $\reg(B)=\reg(C)$.
			\item If $\reg(A) > \reg(B)$, then $\reg(C)=\reg(A)-1$.
			\item If $\reg(B) > \reg(A)$, then $\reg(C)=\reg(B)$.
		\end{enumerate}
\end{lemma}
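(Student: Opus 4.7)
The plan is to derive all seven statements from one source: the long exact sequence in local cohomology attached to $0 \to A \to B \to C \to 0$, combined with the characterization $\reg(M) = \max\{i+j \mid H^i_\m(M)_j \neq 0\}$ recorded in the definition above. The three inequalities (1)-(3) will form the core of the argument, and (4)-(7) will follow from them by a purely formal case analysis on which side of the maximum is active.

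First I would fix an internal degree $j$ and write down the five-term piece
\[
H^{i-1}_\m(C)_j \to H^i_\m(A)_j \to H^i_\m(B)_j \to H^i_\m(C)_j \to H^{i+1}_\m(A)_j
\]
of the long exact sequence. Exactness immediately gives three implications. If $H^i_\m(B)_j \neq 0$, then either its preimage in $H^i_\m(A)_j$ or its image in $H^i_\m(C)_j$ is nonzero, so $i+j \le \max\{\reg(A),\reg(C)\}$; this proves (1). If $H^i_\m(A)_j \neq 0$, then it is either hit from $H^{i-1}_\m(C)_j$ (giving $(i-1)+j \le \reg(C)$, hence $i+j \le \reg(C)+1$) or injects into $H^i_\m(B)_j$ (giving $i+j \le \reg(B)$); this proves (2). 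The symmetric argument applied to $H^i_\m(C)_j$, with the cohomological shift now landing on $A$, gives (3).

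For the equalities (4)-(7), I would feed the inequalities (1)-(3) against the hypothesis in each case. For instance, under $\reg(A) > \reg(C)+1$, statement (1) already gives $\reg(B) \le \reg(A)$, while (2) gives $\reg(A) \le \max\{\reg(B),\reg(C)+1\}$; since $\reg(C)+1 < \reg(A)$, the maximum must be attained at $\reg(B)$, forcing the reverse inequality and hence (4). Parts (5), (6), (7) run identically, each using one of (1)-(3) for the easy bound and another to rule out the wrong branch of the maximum. The arguments are routine once local cohomology is invoked; I expect the only mild obstacle to be tracking the strict versus non-strict inequalities, in particular ensuring that the hypotheses in (4) and (5) are exactly what is needed to collapse the two-term maxima in the right direction.
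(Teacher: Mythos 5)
Your proposal is correct and complete: the paper itself gives no proof of this lemma (it is quoted from the cited reference), and your argument via the five-term piece of the long exact sequence in local cohomology, using $\reg(M)=\max\{i+j \mid H^i_{\mathfrak m}(M)_j\neq 0\}$, is exactly the standard proof of that result. The derivations of (1)--(3) from exactness and of (4)--(7) by playing the inequalities against each hypothesis all check out, including the strictness bookkeeping in (4) and (5).
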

 The following theorem is a combinatorial characterization of Cohen-Macaulay forests. 
\begin{theorem}\cite[Theorem 5]{gmsv18} \label{thm3}
		Let $D$ be a weighted oriented forest without isolated vertices, and let $G$ be its underlying forest. 
		Then the following conditions are equivalent: 
		\begin{enumerate}
			\item $D$ is Cohen-Macaulay. 
			\item $I(D)$ is unmixed; that is, all its associated primes have the same height.
			\item $G$ has a perfect matching $\{x_1,y_1\}, \{x_2,y_2\}, \ldots, \{x_r,y_r\}$ so that 
			$d_G(y_i)=1$ for $i=1,2,\ldots,r$ and $w(x_i)=1$ if $(x_i,y_i) \in E(D)$. 
		\end{enumerate} 
\end{theorem}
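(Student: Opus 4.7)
The plan is to prove the cycle of implications (1) $\Rightarrow$ (2) $\Rightarrow$ (3) $\Rightarrow$ (1). The step (1) $\Rightarrow$ (2) comes for free: a Cohen-Macaulay ring is equidimensional, so all associated primes of $R/I(D)$ have the same height, which is exactly unmixedness.

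For (3) $\Rightarrow$ (1), I would induct on the number $r$ of matched pairs. The base case $r=1$ gives a principal monomial ideal in two variables, which is Cohen-Macaulay. For the inductive step, fix a matched pair $\{x_r, y_r\}$ with $y_r$ the leaf. Note that one of the two vertices has weight $1$: if $(x_r, y_r) \in E(D)$ then $w(x_r)=1$ by hypothesis, while if $(y_r, x_r) \in E(D)$ then $y_r$ is a source, so $w(y_r)=1$ by the source convention. Consider the short exact sequence
\[
0 \to \bigl(R/(I(D):y_r)\bigr)(-1) \xrightarrow{\,\cdot y_r\,} R/I(D) \to R/(I(D),y_r) \to 0.
\]
The quotient $R/(I(D),y_r)$ is essentially the coordinate ring of $D\setminus y_r$, and the colon $I(D):y_r$ reduces further. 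I would verify that each smaller ideal still corresponds to a weighted oriented forest satisfying condition (3) on $V(D)\setminus\{x_r,y_r\}$, apply induction, and conclude via the Depth Lemma that $R/I(D)$ is Cohen-Macaulay of the correct dimension.

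The main obstacle is (2) $\Rightarrow$ (3). This requires translating an algebraic condition into a combinatorial matching structure. I would use the combinatorial description of associated primes of $I(D)$ via \emph{strong vertex covers} of $D$ from \cite{prt19}, inducting on $|V(D)|$. Pick a leaf $y$ of the underlying forest $G$ with unique neighbor $x$, and construct two strong vertex covers of carefully chosen types --- one containing $y$ but not $x$, another containing $x$ but not $y$. Unmixedness forces their heights (computed with multiplicities coming from $V^+(D)$) to coincide, and this constraint pins down the weight and orientation of the edge $\{x,y\}$ exactly as required by (3). The true delicate point --- which I expect to be the main obstacle --- is showing that after stripping off the matched pair $\{x,y\}$ the induced subforest $D\setminus\{x,y\}$ remains unmixed, so that the inductive hypothesis applies; this demands a careful lift/restrict analysis of strong vertex covers across this deletion, since in general unmixedness is not automatically inherited by induced subgraphs.
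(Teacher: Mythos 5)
First, a point of comparison: the paper does not prove this statement at all --- it is quoted verbatim from \cite[Theorem 5]{gmsv18} --- so there is no in-paper argument to measure your sketch against, and I can only assess it on its own terms. Your implication (1) $\Rightarrow$ (2) is correct and standard. Your plan for (2) $\Rightarrow$ (3) is the right strategy in outline: the minimal primes of $I(D)$ are the minimal vertex covers of $G$, so unmixedness of $I(D)$ forces $I(G)$ unmixed and hence (by Villarreal's theorem for forests) the perfect matching with $d_G(y_i)=1$; the weight condition then comes from exhibiting a strong vertex cover of the wrong height via \cite{prt19} when $w(x_i)\geq 2$ and $(x_i,y_i)\in E(D)$. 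But as you yourself note, this is a plan rather than a proof, and the inheritance of unmixedness under deletion is exactly where the content lies.

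The concrete gap is in (3) $\Rightarrow$ (1). In your short exact sequence the cokernel is $R/(I(D),y_r)\cong \mathbb{K}[V(D)\setminus \{y_r\}]/I(D\setminus y_r)$, and $D\setminus y_r$ has an odd number of vertices, hence no perfect matching: it does not satisfy condition (3), need not be unmixed, and its depth can drop below $r$. Concretely, let $D$ be the path $y_2-x_2-x_1-y_1$ with all weights $1$, so $r=2$ and $R/I(D)$ is Cohen--Macaulay of dimension $2$. Then $D\setminus y_2$ is the path $x_2-x_1-y_1$, and $\depth\bigl(\mathbb{K}[x_1,x_2,y_1]/(x_1x_2,x_1y_1)\bigr)=1<2$ (the ideal has associated primes $(x_1)$ and $(x_2,y_1)$ of different heights). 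The Depth Lemma then only yields $\depth R/I(D)\geq\min\{2,1\}=1$, which does not establish Cohen--Macaulayness, so the step ``verify that each smaller ideal still corresponds to a forest satisfying (3), apply induction, and conclude via the Depth Lemma'' fails for the cokernel and the induction does not close. You need a different reduction: for instance, choose the matched pair $\{x_t,y_t\}$ so that $x_t$ has at most one neighbour $z$ besides $y_t$ (such an edge always exists in a forest; it is the same choice this paper makes for its regularity computations) and build the exact sequences around colon and quotient by $x_t$, or polarize $I(D)$ and reduce to the unweighted whisker theorem. Until one of these is carried out, (3) $\Rightarrow$ (1) is not proved.
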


\section{Castelnuovo Mumford Regularity of Cohen-Macaulay weighted oriented forests} \label{sec3}

In this section we prove a formula for $\reg(I(D))$. To show this, we prove some technical lemmas. 

\begin{lemma}\label{lem2}
Let $D$ be a weighted oriented unmixed forest. Let $\{\{x_1,y_1\}, \ldots, \{x_r,y_r\}\}$ be a perfect matching in the underlying graph $G$ of $D$. Suppose $y_1, \ldots, y_r$ are sinks. Then for any edge $(x_t,y_t)$ and $z \in V(D)$ such that $N_D(x_t)=\{y_t,z\}$ and for all $k\geq 1$, 
\begin{enumerate}
    \item $\displaystyle I(D\setminus y_t)^k \bigcap x_t y_t^{w(y_t)} I(D)^{k-1}
           = z x_t y_t^{w(y_t)}  I(D\setminus y_t)^{k-1}+ x_t y_t^{w(y_t)} I(D\setminus N_D[x_t])^k $, 
   \item $\displaystyle  z x_t y_t^{w(y_t)} I(D\setminus y_t)^{k-1} \bigcap x_t y_t^{w(y_t)}I(D\setminus N_D[x_t])^k= z x_t y_t^{w(y_t)} I(D\setminus N_D[x_t])^k$.   
 \end{enumerate}
\end{lemma}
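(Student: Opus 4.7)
The plan is to first extract the structural implications of the unmixedness hypothesis around $x_t$ and then verify each of the two equalities by double inclusion, exploiting that specific variables are absent from the generators of the subideals involved. For the setup: since $y_t$ is a sink and $\{x_t,y_t\}$ is a matching edge, the edge must be oriented as $(x_t,y_t)\in E(D)$, and Theorem \ref{thm3} forces $w(x_t)=1$. The other neighbour $z$ of $x_t$ appears in the perfect matching as $x_s$ or $y_s$ for some $s\neq t$; it cannot be a $y_s$, since every $y_i$ has degree $1$ in $G$ whereas $z$ is adjacent both to $x_t$ and to its matching partner. Hence $z=x_s$ for some $s$, and since $y_s$ is a sink, $(x_s,y_s)\in E(D)$, so Theorem \ref{thm3} again gives $w(z)=w(x_s)=1$. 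Regardless of the orientation of the edge between $x_t$ and $z$, it therefore contributes the generator $x_tz$ to $I(D\setminus y_t)$, and this is the only generator of $I(D\setminus y_t)$ involving $x_t$. Set $u:=x_ty_t^{w(y_t)}$.

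For part (1), the inclusion $\supseteq$ is direct: $u\cdot I(D\setminus N_D[x_t])^k$ sits in both $I(D\setminus y_t)^k$ and $u\cdot I(D)^{k-1}$ via the containments $I(D\setminus N_D[x_t])\subseteq I(D\setminus y_t)\subseteq I(D)$, while $zu\cdot I(D\setminus y_t)^{k-1}=(x_tz)\cdot y_t^{w(y_t)}\cdot I(D\setminus y_t)^{k-1}$ lies in $I(D\setminus y_t)^k\cap u\cdot I(D)^{k-1}$. For $\subseteq$, I take a monomial $m$ in the LHS, write $m=um'$ with $m'\in I(D)^{k-1}$, and split on whether $z\mid m'$. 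If $z\nmid m'$, then $z\nmid m$, so for any product $g_1\cdots g_k\mid m$ of generators of $I(D\setminus y_t)$ each $g_i$ is $z$-free; in particular no $g_i=x_tz$, hence no $g_i$ involves $x_t$ (or $y_t$), so each $g_i\in I(D\setminus N_D[x_t])$. Coprimality of this product with $u$ yields $g_1\cdots g_k\mid m'$, so $m'\in I(D\setminus N_D[x_t])^k$. If $z\mid m'$, then $zu\mid m$, and I claim $m/(x_tz)\in I(D\setminus y_t)^{k-1}$: for a chosen product $g_1\cdots g_k\mid m$, either some $g_i=x_tz$, in which case dropping it leaves $k-1$ generators dividing $m/(x_tz)$, or no $g_i$ involves $x_t$, and then exponent comparison shows that removing any single $g_j$ that contains $z$ (or keeping the full product when no $g_j$ contains $z$) yields a product of generators dividing $m/(x_tz)$. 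Since $y_t$ does not appear in any generator of $I(D\setminus y_t)$, further dividing by $y_t^{w(y_t)}$ preserves membership, so $m/(zu)\in I(D\setminus y_t)^{k-1}$.

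For part (2), $\supseteq$ is immediate from $I(D\setminus N_D[x_t])\subseteq I(D\setminus y_t)$. For $\subseteq$, a monomial $m=uf=zug$ with $f\in I(D\setminus N_D[x_t])^k$ and $g\in I(D\setminus y_t)^{k-1}$ gives $f=zg$; since every generator of $I(D\setminus N_D[x_t])$ is $z$-free, any product of $k$ generators dividing $f$ also divides $g=f/z$, placing $g\in I(D\setminus N_D[x_t])^k$. The main obstacle is the structural step $w(z)=1$: it is precisely this that makes $x_tz$ a generator of $I(D\setminus y_t)$ and that ensures $z$ is absent from the generators of $I(D\setminus N_D[x_t])$, which are the two facts driving every inclusion. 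Once these are in hand, the rest is monomial bookkeeping grounded in the principle that a variable absent from an ideal's generators may be freely divided out of its elements.
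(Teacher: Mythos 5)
Your proof is correct and follows essentially the same route as the paper's: a double inclusion by monomial bookkeeping, with a case split on divisibility by $z$ (and $x_t$), resting on the two facts that $x_tz$ is the unique generator of $I(D\setminus y_t)$ involving $x_t$ and that $z$ is absent from the generators of $I(D\setminus N_D[x_t])$. Your explicit derivation of $w(z)=1$ from the unmixedness characterization is a welcome detail that the paper only records later, in the proof of Lemma \ref{lem: leaf}.
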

\begin{proof}
(1) Note that $ x_t y_t^{w(y_t)} I(D\setminus N_D[x_t])^k $  and                            $ x_t y_t^{w(y_t)}z I(D\setminus y_t)^{k-1}$ are contained in $I(D\setminus y_t)^k \bigcap x_t y_t^w(y_t)I(D)^{k-1},$ because $x_t z \in \mathcal{G}(I(D\setminus y_t)).$ This implies that 
$$ z x_t y_t^{w(y_t)}  I(D\setminus y_t)^{k-1}+ x_t y_t^{w(y_t)} I(D\setminus N_D[x_t])^k \subseteq  I(D\setminus y_t)^k \bigcap x_t y_t^{w(y_t)} I(D)^{k-1}. $$
Now we prove the reverse inclusion. 
Let $f \in I(D\setminus y_t)^k \bigcap x_t y_t^{w(y_t)} I(D)^{k-1} $. Then $f=x_t y_t^{w(y_t)}f_1,$ where $f_1 \in R$ and $m \mid f,$ for some $m \in \mathcal{G}(I(D\setminus y_t)^k)$. Now We will see the proof in three further cases.  \\      
{\bf Case 1:} Suppose $x_t \mid m$. Then $z \mid m$ because $ z x_t  \in \mathcal{G}(I(D\setminus y_t)).$ In fact, we have  $ z x_t  \mid m$. Then $m= z x_t  m_1$, for some $m_1 \in \mathcal{G}(I(D\setminus y_t)^{k-1})$. Since $y_t^{w(y_t)}\mid f$, ~ $ m\mid f$ and $y_t^{w(y_t)} \nmid m$, we get that $m \mid x_t f_1$. This implies that 
$ z x_t  m_1 \mid x_t f_1$. This gives that $m_1\mid f_1$. Thus $f_1\in I(D\setminus y_t)^{k-1}$. This implies that $f \in  zx_ty_t^{w(y_t)}I(D\setminus y_t)^{k-1}.$  \\
{\bf Case 2:} Suppose $x_t \nmid m$ and $z \mid m.$ Then we have $m \in \mathcal{G}(I(D\setminus \{x_t,y_t\})^k)$. Also,  $ zu^{w(u)} \mid m $ because $zu^{w(u)} \in \mathcal{G}(I(D\setminus \{x_t,y_t\})),$ where $u \in N_D(z)\setminus x_t$ . Then $m=z u^{w(u)}m_2,$ for some $m_2 \in \mathcal{G}(I(D\setminus \{x_t,y_t\})^{k-1}).$  On the other hand, we have  $f=x_t y_t^{w(y_t)}f_1,$ where $f_1 \in R$. Thus we have $m \mid f_1$.  Therefore,
$$f=x_ty_t^{w(y_t)}f_1=x_t y_t^{w(y_t)}zu^{w(u)}m_2g_2, $$
for some  $g_2 \in R$. Thus we have $$ f \in zx_t y_t^{w(y_t)}u^{w(u)}I(D\setminus \{x_t,y_t\})^{k-1} \subseteq zx_ty_t^{w(y_t)}I(D\setminus \{x_t,y_t\})^{k-1} \subseteq zx_ty_t^{w(y_t)}I(D\setminus y_t)^{k-1}.$$  
{\bf Case 3:} Suppose $x_t \nmid m$ and $z \nmid m.$ Then $m \in \mathcal{G}(I(D\setminus N_D[x_t])^k).$ Since $f=x_ty_t^{w(y_t)}f_1,$ we get that $m \mid f_1$ because $m \mid f$ and $m \in \mathcal{G}(I(D\setminus N_D[x_t])^k).$ Therefore,  $$f=x_ty_t^{w(y_t)}f_1=x_t y_t^{w(y_t)}mg_3, $$
for some $g_3 \in R.$ This implies that $f \in x_ty_t^{w(y_t)}I(D\setminus N_D[x_t])^k.$\\

\vskip 0.1 cm
\noindent 
(2) Note that $y_t^{w(y_t)}x_tz I(D\setminus N_D[x_t])^k \subseteq y_t^{w(y_t)}x_tz I(D\setminus y_t)^{k-1} \bigcap x_t y_t^{w(y_t)}I(D\setminus N_D[x_t])^k.$ Now, we will prove the other inclusion. 
Let $f \in y_t^{w(y_t)}x_tz I(D\setminus y_t)^{k-1} \bigcap x_t y_t^{w(y_t)}I(D\setminus N_D[x_t])^k.$ Then $f=y_t^{w(y_t)}x_tzf_1 $, for some $f_1 \in R$ and $x_t y_t^{w(y_t)}m_1 \mid f$, for some $m_1 \in \mathcal{G}(I(D\setminus N_D[x_t])^k)$. This implies that  $m_1 \mid f_1.$ This gives that $f=y_t^{w(y_t)}x_tzm_1g,$ for some $g \in R.$ Thus $f \in zx_ty_t^{w(y_t)}I(D\setminus N_D[x_t])^k$, as required.  
\end{proof}

\begin{lemma}\label{lem: leaf}
Let $D$ be a weighted oriented unmixed forest. Let $\{\{x_1,y_1\}, \ldots, \{x_r,y_r\}\}$ be a perfect matching in the underlying graph $G$ of $D$. Suppose $y_1, \ldots, y_r$ are sinks. Then for any edge $(x_t,y_t)$ and $z \in V(D)$ such that $N_D(x_t)=\{y_t,z\}$, and for all $k\geq 1$, 
\begin{enumerate}
\item $(I(D \setminus y_t)^k : z x_t) = I(D \setminus y_t)^{k-1}, $
\item  $(I(D\setminus y_t)^k, x_t)=(I(D\setminus \{x_t,y_t\})^k,x_t), $
\item $((I(D\setminus y_t)^k:x_t),z)=((I(D\setminus N_D[x_t])^k:x_t), z)=(I(D\setminus N_D[x_t])^k,z).$
 \end{enumerate}
 \end{lemma}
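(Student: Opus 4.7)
Set $J := I(D\setminus y_t)$, $K := I(D\setminus\{x_t,y_t\})$, and $L := I(D\setminus N_D[x_t])$. The essential preliminary observation is that $w(x_t)=w(z)=1$. Indeed, since $y_t$ is a sink the matching edge must be $(x_t,y_t)\in E(D)$, so Theorem \ref{thm3} forces $w(x_t)=1$. Moreover $z$ cannot equal any $y_j$ (else the leaf $y_j$ of degree one would have the second neighbor $x_t$), so $z=x_j$ for some $j\neq t$ in the matching, and the same theorem gives $w(z)=w(x_j)=1$. Consequently $zx_t\in\mathcal{G}(J)$ regardless of the orientation of $\{x_t,z\}$, $x_t$ appears in no other generator of $J$, and every generator of $J$ involving $z$ has $z$-degree exactly $1$ (these are $zu^{w(u)}$ for $u\in N_D(z)^+\setminus\{x_t\}$ and $vz$ for $v\in N_D(z)^-\setminus\{x_t\}$).

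\textbf{Proof plan for (1).} The inclusion $J^{k-1}\subseteq (J^k:zx_t)$ is clear from $zx_t\in J$. For the reverse, take a monomial $g$ with $zx_tg\in J^k$, so $p_1\cdots p_k$ divides $zx_tg$ for some $p_1,\ldots,p_k\in\mathcal{G}(J)$. If some $p_i=zx_t$, cancelling it yields $\prod_{j\neq i}p_j\mid g$, hence $g\in J^{k-1}$. Otherwise no $p_i$ involves $x_t$, so $p_1\cdots p_k\mid zg$. If $\deg_z(p_1\cdots p_k)\leq\deg_z(g)$ then $p_1\cdots p_k\mid g$ and $g\in J^k\subseteq J^{k-1}$. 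Else $\deg_z(p_1\cdots p_k)=\deg_z(g)+1$, so some $p_i$ contains $z$; because each such $p_i$ has $z$-degree exactly $1$, dividing by that one $p_i$ drops the total $z$-count by one while the other $\deg_v$ still satisfy $\deg_v(p_1\cdots p_k/p_i)\leq\deg_v(g)$, so $p_1\cdots p_k/p_i\in J^{k-1}$ divides $g$. I expect this last sub-case, which critically exploits $w(z)=1$, to be the main technical point.

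\textbf{Proof plan for (2) and (3).} For (2), $\mathcal{G}(J)=\mathcal{G}(K)\cup\{zx_t\}$, so every product of $k$ generators of $J$ either lies in $K^k$ or is divisible by $x_t$; this gives $J^k\subseteq K^k+(x_t)$, and together with the obvious $K\subseteq J$ yields the claimed equality modulo $x_t$. For (3), the second equality is immediate since $x_t\notin\supp(L)$, whence $(L^k:x_t)=L^k$. For the first equality, $(\supseteq)$ follows from $L\subseteq J$. For $(\subseteq)$, take a monomial $g$ with $x_tg\in J^k$ and write $x_tg=p_1\cdots p_k\cdot q$ for some $p_i\in\mathcal{G}(J)$. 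If some $p_i=zx_t$, then $z\mid x_tg$ forces $z\mid g$. Otherwise $x_t\nmid p_1\cdots p_k$, so $p_1\cdots p_k\mid g$; if any $p_i$ contains $z$ then $z\mid g$, and if not, every $p_i$ lies in $\mathcal{G}(L)$, so $g\in L^k$. In each case $g\in(L^k,z)$, as required.
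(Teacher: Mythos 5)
Your proposal is correct, but it proves the lemma from scratch, whereas the paper's own proof is essentially a two-line reduction: it observes that $x_t$ is a leaf of $D\setminus y_t$ with unique neighbour $z$ and that $w(x_t)=w(z)=1$, and then invokes \cite[Proposition 3.2]{x21} (the corresponding colon/sum identities for a leaf of weight one in a vertex-weighted rooted forest). You make exactly the same structural observation — and in fact justify it more carefully than the paper does: the paper merely asserts $w(x_t)=w(z)=1$, while you derive it from Theorem \ref{thm3} by noting that $z$ cannot be any $y_j$ (the $y_j$ are degree-one leaves by the unmixedness characterization, together with the uniqueness of the perfect matching in a forest, a fact both you and the paper use tacitly), hence $z=x_j$ with $(x_j,y_j)\in E(D)$ and so $w(z)=1$. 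From there your degree-counting arguments are sound: in (1) the only delicate step is the sub-case $\deg_z(p_1\cdots p_k)=\deg_z(g)+1$, which you handle correctly by discarding one generator of $z$-degree one; in (2) the identity $\mathcal{G}(I(D\setminus y_t))=\mathcal{G}(I(D\setminus\{x_t,y_t\}))\cup\{zx_t\}$ does the work; and in (3) the case analysis on whether some $p_i$ equals $zx_t$ or merely contains $z$ is complete. What your route buys is self-containedness (no reliance on the hypotheses or conventions of \cite{x21}); what the paper's route buys is brevity. Both are valid.
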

 \begin{proof}
Note that $x_t$ is a leaf in $ D\setminus y_t $ and  $N_{D\setminus y_t}(x_t)=\{z\}$ as $N_D(x_t) = \{y_t, z\}$. Also, one can write $N_{D\setminus y_t}^{-}(x_t)=\{z\}$ and $w(x_t)=1, w(z)=1$. Thus the proof follows from \cite[Proposition 3.2]{x21}.
 \end{proof}

\begin{proposition}\label{prop3}
 	Let $D$ be a weighted oriented unmixed forest. Let  $\{\{x_1,y_1\}, \ldots, \{x_r,y_r\}\}$ be a perfect matching in the underlying graph $G$ of $D$. Suppose $y_1,\ldots,y_r$ are sinks. Then
 	 $$ \reg(I(D\setminus y_t)^k \leq \max\{\reg(I(D\setminus y_t)^{k-1})+2,\reg(I(D\setminus N_D[x_t])^k)+1,\reg(I(D\setminus \{x_t,y_t\})^k)\}.$$
 \end{proposition}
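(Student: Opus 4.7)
The plan is to bound $\reg(I(D\setminus y_t)^k)$ by two successive applications of the standard short exact sequence
$$0 \to R/(I : v)(-1) \xrightarrow{\cdot v} R/I \to R/(I,v) \to 0,$$
first with $v = x_t$, and then with $v = z$ applied to the colon ideal. Each application feeds the three parts of Lemma \ref{lem: leaf} into the Regularity Lemma (Lemma \ref{lem1}(1)).

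More concretely, I would first write $I := I(D\setminus y_t)^k$ and apply the short exact sequence above with $v = x_t$. Lemma \ref{lem1}(1), after converting quotient regularities back into ideal regularities via $\reg(R/J) = \reg(J) - 1$, yields
$$\reg(I) \leq \max\bigl\{\reg(I : x_t) + 1,\; \reg(I, x_t)\bigr\}.$$
For the second term, Lemma \ref{lem: leaf}(2) gives $(I(D\setminus y_t)^k, x_t) = (I(D\setminus\{x_t,y_t\})^k, x_t)$. Since $x_t$ does not appear in the support of $I(D\setminus\{x_t,y_t\})^k$, Lemma \ref{lem5}(1) (with $J = (x_t)$, which has regularity $1$) shows that adjoining $x_t$ does not change the regularity, so $\reg(I, x_t) = \reg(I(D\setminus\{x_t,y_t\})^k)$.

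Next I would bound $\reg(I : x_t)$. Set $J := I(D\setminus y_t)^k : x_t$ and apply the same short exact sequence with $v = z$ to $J$, obtaining
$$\reg(J) \leq \max\bigl\{\reg(J : z) + 1,\; \reg(J, z)\bigr\}.$$
By Lemma \ref{lem: leaf}(1), $J : z = I(D\setminus y_t)^k : zx_t = I(D\setminus y_t)^{k-1}$, and by Lemma \ref{lem: leaf}(3), $(J, z) = (I(D\setminus N_D[x_t])^k, z)$, whose regularity equals $\reg(I(D\setminus N_D[x_t])^k)$ since $z$ is a fresh variable relative to the support of $I(D\setminus N_D[x_t])^k$.

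Substituting the bound on $\reg(J) = \reg(I : x_t)$ into the first inequality gives
$$\reg(I) \leq \max\bigl\{\reg(I(D\setminus y_t)^{k-1}) + 2,\; \reg(I(D\setminus N_D[x_t])^k) + 1,\; \reg(I(D\setminus\{x_t,y_t\})^k)\bigr\},$$
which is the desired inequality. The only non-routine step is identifying the colon and sum ideals in closed form, but those identifications have already been performed in Lemmas \ref{lem2} and \ref{lem: leaf}; the rest is careful bookkeeping of the degree shift $(-1)$ in each short exact sequence and the observation that adjoining a variable outside the support of a monomial ideal leaves its regularity unchanged.
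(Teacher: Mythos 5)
Your proposal is correct and follows essentially the same route as the paper: the same two successive short exact sequences (coloning and adding $x_t$, then coloning and adding $z$), the same identifications via Lemma \ref{lem: leaf}, the same use of Lemma \ref{lem5}(1) to discard the adjoined variable, and the same application of Lemma \ref{lem1}(1). The only cosmetic difference is that you phrase the exact sequences for the quotients $R/I$ and convert via $\reg(R/J)=\reg(J)-1$, whereas the paper writes them directly for the ideals; the resulting bounds are identical.
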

 \begin{proof}
 Let us take two exact sequences
 \begin{equation}\label{eqn1}
     0 \rightarrow (I(D\setminus y_t)^k:x_t)(-1) \xrightarrow{.x_t} I(D\setminus y_t)^k \rightarrow (I(D\setminus y_t)^k,x_t) \rightarrow 0
 \end{equation}
  \begin{equation}\label{eqn2}
     0 \rightarrow (I(D\setminus y_t)^k: z x_t)(-1) \xrightarrow{.z} (I(D\setminus y_t)^k:x_t) \rightarrow ((I(D\setminus y_t)^k:x_t),z) \rightarrow 0
 \end{equation}
The Lemma  \ref{lem: leaf}(1),  \ref{lem: leaf}(2) and  \ref{lem: leaf}(3) result in 
\begin{enumerate}
    \item $ \reg((I(D\setminus y_t)^k : zx_t)) = \reg(I(D\setminus y_t )^{k-1}), $
    \item $ \reg(I(D\setminus y_t)^k,x_t) = \reg((I(D\setminus \{x_t,y_t\})^k,z)), $ 
    \item $ \reg(((I(D\setminus y_t)^k:x_t),z)=\reg((I(D\setminus N_D[x_t])^k,z)), $
\end{enumerate}
 respectively.
Note that we have $\reg(I(D\setminus \{x_t,y_t\})^k,x_t)=\reg(I(D\setminus \{x_t,y_t\})^k)$ and $\reg(I(D\setminus N_D[x_t])^k, z)=\reg(I(D\setminus N_D[x_t])^k)$ using Lemma \ref{lem5}(1). 
 Now, we apply regularity Lemma \ref{lem1}(1) on exact sequence \eqref{eqn2}   to get  
\begin{align*}
\reg(I(D\setminus y_t)^k:x_t) & \leq \max\{\reg(I(D\setminus y_t)^{k-1})+1,\reg(I(D\setminus N_D[x_t])^k,z)\}\\
    = & \max\{\reg(I(D\setminus y_t)^{k-1})+1,\reg(I(D\setminus N_D[x_t])^k)\}.
\end{align*}
Also, we apply regularity lemma \ref{lem1}(1) on exact sequence \eqref{eqn1} to get  
 \begin{align*}
\reg(I(D\setminus y_t)^k  \leq & \max\{\reg((I(D\setminus      y_t)^k):x_t)+1,\reg((I(D\setminus \{x_t,y_t\})^k,x_t))\}\\
   = & \max\{\reg((I(D\setminus y_t)^k):x_t)+1,\reg((I(D\setminus \{x_t,y_t\})^k)\}. 
\end{align*}
 Thus $$\reg(I(D\setminus y_t)^k \leq \max\{\reg(I(D\setminus y_t)^{k-1})+2,\reg(I(D\setminus N_D[x_t])^k)+1,\reg(I(D\setminus \{x_t,y_t\})^k)\} .$$ This proves the proposition.
 \end{proof}
 \noindent
 {\bf Notation :} We fix the following notation for any weighted oriented graph $D$:  
 \begin{align*}
\Theta(k,D) = & \max \Bigg\{ (\max\{w(y_{i_j})\}+1)(k-1)+ \sum_{j=1}^{s}w(y_{i_j})+1 : \mbox{ none of the edges } \{x_{i_j},y_{i_j}\}\\
 & \mbox{ are adjacent }, y_{i_j} \in V(D)    \Bigg\}, \mbox{ for all } k \geq 1, \mbox{ and } \\
\Theta(0,D) & = 0.
\end{align*}

 \begin{lemma}\label{lem6}
	Let $D$ be a weighted oriented unmixed forest. Let  $\{\{x_1,y_1\}, \ldots, \{x_r,y_r\}\}$ be a perfect matching in the underlying graph $G$ of $D$. Suppose $y_1,\ldots,y_r$ are sinks. Then for any edge $(x_t,y_t)$ and $z \in V(D)$ such that $N_D(x_t)=\{y_t,z\}$, we have
 \begin{align*}
\Theta(k-1,D)+w(y_t)+1 \leq \Theta(k,D).
\end{align*} 
 \end{lemma}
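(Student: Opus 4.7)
The plan is to prove the inequality by exhibiting, for a set $M^{\ast}$ of pairwise non-adjacent matching edges realising $\Theta(k-1,D)$, either $M^{\ast}$ itself or a small modification $M'$ obtained by adding or swapping in the edge $\{x_t,y_t\}$, and then checking that the contribution of the chosen set to $\Theta(k,D)$ beats $\Theta(k-1,D)+w(y_t)+1$.

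First I would dispose of $k=1$: since $\Theta(0,D)=0$ by definition, taking $M=\{(x_t,y_t)\}$ (trivially pairwise non-adjacent) in the formula for $\Theta(1,D)$ already gives $\Theta(1,D)\ge w(y_t)+1$. For $k\ge 2$, let $M^{\ast}=\{\{x_{i_1},y_{i_1}\},\ldots,\{x_{i_s},y_{i_s}\}\}$ realise $\Theta(k-1,D)$, and set $W=\max_j w(y_{i_j})$, $S=\sum_j w(y_{i_j})$, so $\Theta(k-1,D)=(W+1)(k-2)+S+1$. If $W\ge w(y_t)$, reusing $M^{\ast}$ in the definition of $\Theta(k,D)$ already suffices, because $\Theta(k,D)\ge (W+1)(k-1)+S+1=\Theta(k-1,D)+(W+1)\ge \Theta(k-1,D)+w(y_t)+1$.

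The substantive case is $W<w(y_t)$. Here I would exploit a structural observation: because $y_t$ is a sink leaf and $N_D(x_t)=\{y_t,z\}$, the edge $\{x_t,y_t\}$ can be adjacent to another matching edge $\{x_{\ell},y_{\ell}\}$ only if $z\in\{x_{\ell},y_{\ell}\}$; but each $y_{\ell}$ is a leaf whose unique neighbour in $G$ is $x_{\ell}$, so $z=y_{\ell}$ would force $\ell=t$, contradicting $z\ne y_t$. Hence $z=x_j$ for a unique $j\ne t$, and $\{x_j,y_j\}$ is the only matching edge that could possibly obstruct inserting $(x_t,y_t)$. Also $(x_t,y_t)\notin M^{\ast}$, since otherwise $W\ge w(y_t)$. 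If $\{x_j,y_j\}\notin M^{\ast}$, set $M'=M^{\ast}\cup\{(x_t,y_t)\}$; if $\{x_j,y_j\}\in M^{\ast}$, set $M'=(M^{\ast}\setminus\{(x_j,y_j)\})\cup\{(x_t,y_t)\}$. In both cases $M'$ is pairwise non-adjacent by the structural observation, and the new maximum weight is $w(y_t)$.

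For the arithmetic verification, in the first subcase the new sum is $S+w(y_t)$ and I would check
\[
(w(y_t)+1)(k-1)+S+w(y_t)+1\;\ge\;(W+1)(k-2)+S+1+w(y_t)+1,
\]
which reduces, using the integrality bound $w(y_t)\ge W+1$, to $(k-1)+(W+1)\ge 0$. In the swap subcase the new sum is $S-w(y_j)+w(y_t)$ with $w(y_j)\le W$, and the analogous comparison reduces to $k-1\ge 0$. The main obstacle is precisely this swap subcase: one must be careful to verify that a single deletion and insertion preserves pairwise non-adjacency (which is exactly what the observation $z=x_j$ gives), and then track both the change in $S$ and the change in $W$ simultaneously while still extracting the required slack from $w(y_t)\ge W+1$.
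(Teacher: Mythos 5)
Your proof is correct and rests on the same two ingredients as the paper's: reusing the extremal family for $\Theta(k-1,D)$ gains $\max_j w(y_{i_j})+1$ per power, and the leaf structure ($d_G(y_t)=1$, $N_D(x_t)=\{y_t,z\}$) forces $z=x_j$ for a unique $j$, so $\{x_t,y_t\}$ can always be swapped into a non-adjacent family at the cost of at most that one edge. The only organizational difference is that you run the swap directly at level $k$ to build a witness, whereas the paper runs it at level $k-1$ as a proof by contradiction to conclude $w(y_t)\leq w(y_{j_i})$; your arithmetic in both subcases checks out (the residual in the insertion subcase is in fact $W+(k-1)\cdot(w(y_t)-W)\geq 0$ rather than exactly $(k-1)+(W+1)$, but the conclusion is unaffected).
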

 \begin{proof}
Suppose $\Theta(k-1,D)=(k-2)(w(y_{j_i})+1)+\sum_{n=1}^{n=s}w(y_{j_n})+1 $ for some $y_i's$ and $w(y_{j_i})= \max\{y_{j_1}, \cdots, y_{j_s} \}$.
 Then we have
\begin{align*}
\Theta(k-1,D) +w(y_t)+1= & (k-1) (w(y_{j_i}))+1)+ \sum_{n=1}^{s}w(y_{j_n})+1-(w(y_{j_i})+1)+(w(y_t)+1) \\
         = & (k-1)(w(y_{j_i}))+1) + \sum_{n=1}^{s}w(y_{j_n})+1+w(y_t)-w(y_{j_i}) \\
         \leq & \Theta(k,D)+w(y_t)-w(y_{j_i}).
\end{align*}
Suppose $ y_t \in \{y_{j_1},\ldots, y_{j_s}\} $, then we have 
$ \Theta(k-1,D)  + w(y_t)+1 \leq \Theta(k,D).$ \\
Now suppose $ y_t \not \in \{y_{j_1},\ldots, y_{j_s}\} $. Then $y_j \in  \{y_{j_1},\ldots, y_{j_s}\}$ and $$ \sum_{n=1,i_n \neq j}^{s}w(y_{j_n})+1 + w(y_j)\leq  \Theta(1, D\setminus \{N_D[x_t]\cup N_D[z]\}) +w(y_j) \leq \Theta(1, D\setminus N_D[x_t])+w(y_j) $$ for $(z,y_j) \in E(D)$ as $N_D(x_t)=\{z,y_t\}$. If possible suppose $w(y_t) > w(y_{j_i})$, then we have
\begin{align*}
  \Theta(k-1,D) = & (k-2)(w(y_{j_i})+1)+ \sum_{n=1,i_n \neq j}^{s}w(y_{j_n})+1 +w(y_j)  \\
              \leq & (k-2)(w(y_{j_i})+1)+ \Theta(1, D\setminus \{N_D[x_t]\cup N_D[z]\}) + w(y_j) \\
              < & (k-2)(w(y_t)+1)+ \Theta(1, D\setminus N_D[x_t]) + w(y_t) \mbox{ (since } w(y_t) > w(y_{j_i})  \geq w(y_j) ) \\
              \leq & \Theta(k-1,D).
\end{align*}
where last inequality holds because set of all non-adjacent pairs $ \{x_{i_j},y_{i_j}\}$ in $D\setminus N_D[x_t] $ are also non-adjacent to $\{x_t,y_t\}$ in $D$. Therefore we get contradiction that $ y_t \not \in \{y_{j_1},\ldots, y_{j_s}\} $. Hence $w(y_t)\leq w(y_{j_i}).$
This implies that $$ \Theta(k-1,D)  + w(y_t)+1 \leq \Theta(k,D).$$
\end{proof}

 \begin{lemma}\label{lm3.6}
 	Let $D$ be a weighted oriented unmixed forest. Let  $\{\{x_1,y_1\}, \ldots, \{x_r,y_r\}\}$ be a perfect matching in the underlying graph $G$ of $D$. Suppose $y_1,\ldots,y_r$ are sinks. Then for any edge $(x_t,y_t)$ and $z \in V(D)$ such that $N_D(x_t)=\{y_t,z\}$, we have
 \begin{align*}
\Theta(k,D)=\max\{\Theta(k,D\setminus N_D[x_t])+w(y_t),\Theta(k,D\setminus \{x_t,y_t\}),\Theta(k-1,D)+w(y_t)+1\}.
\end{align*}	
 \end{lemma}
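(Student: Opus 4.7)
The plan is to prove both inequalities in the claimed identity separately.

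For the bound $\Theta(k,D)\geq\max\{\cdots\}$, three things must be checked. The bound $\Theta(k-1,D)+w(y_t)+1\leq\Theta(k,D)$ is exactly Lemma \ref{lem6}. The bound $\Theta(k,D\setminus\{x_t,y_t\})\leq\Theta(k,D)$ is immediate because any family of pairwise non-adjacent edges inside the subgraph is still a valid competitor in $D$. For $\Theta(k,D\setminus N_D[x_t])+w(y_t)\leq\Theta(k,D)$, pick a family $\{\{x_{i_j},y_{i_j}\}\}_{j=1}^{s}$ in $D\setminus N_D[x_t]$ attaining $\Theta(k,D\setminus N_D[x_t])$ and adjoin the edge $\{x_t,y_t\}$; since $D\setminus N_D[x_t]$ omits $x_t,y_t,z$, the new edge is non-adjacent to every existing one, and a short split on whether $w(y_t)$ exceeds $\max_j w(y_{i_j})$ shows the resulting value is at least $\Theta(k,D\setminus N_D[x_t])+w(y_t)$.

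For the reverse bound $\Theta(k,D)\leq\max\{\cdots\}$, fix a family $\mathcal{E}=\{\{x_{i_j},y_{i_j}\}\}_{j=1}^{s}$ realizing $\Theta(k,D)$, set $M=\max_j w(y_{i_j})$ and $S=\sum_j w(y_{i_j})$, and split on whether $\{x_t,y_t\}\in\mathcal{E}$. If $\{x_t,y_t\}\notin\mathcal{E}$, then because $y_t$ has degree one with unique neighbor $x_t$, and $\{x_t,y_t\}$ is the only perfect matching edge through $x_t$ or $y_t$, the family $\mathcal{E}$ lies in $D\setminus\{x_t,y_t\}$, giving $\Theta(k,D)\leq\Theta(k,D\setminus\{x_t,y_t\})$. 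If $\{x_t,y_t\}\in\mathcal{E}$, the remaining edges are non-adjacent to $\{x_t,y_t\}$, and since every vertex adjacent to $\{x_t,y_t\}$ belongs to $N_D[x_t]=\{x_t,y_t,z\}$, this reduced family sits inside $D\setminus N_D[x_t]$ with its own max $M'$ and sum $S'=S-w(y_t)$. When $M'\geq w(y_t)$, $M=M'$ and the attained value equals $(M'+1)(k-1)+S'+w(y_t)+1\leq \Theta(k,D\setminus N_D[x_t])+w(y_t)$. When $M'<w(y_t)$, $M=w(y_t)$, and inserting the same $\mathcal{E}$ into the formula for $\Theta(k-1,D)$ gives $\Theta(k-1,D)\geq (w(y_t)+1)(k-2)+S+1$, so adding $w(y_t)+1$ to both sides recovers $\Theta(k,D)$, yielding $\Theta(k,D)\leq \Theta(k-1,D)+w(y_t)+1$.

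The main obstacle is the bookkeeping in the last subcase, where $w(y_t)$ strictly exceeds the reduced maximum $M'$: the $\Theta(k,D\setminus N_D[x_t])+w(y_t)$ bound becomes too weak because the leading coefficient in $k-1$ jumps from $M'+1$ to $w(y_t)+1$, and one is forced to reroute through the $\Theta(k-1,D)+w(y_t)+1$ term. This is precisely why all three terms must appear on the right-hand side. The structural input throughout is that $y_t$ is a sink of degree one, so $N_D[\{x_t,y_t\}]=\{x_t,y_t,z\}$, and deleting $N_D[x_t]$ is exactly what is needed to decouple the edge $\{x_t,y_t\}$ from the rest of any non-adjacent family.
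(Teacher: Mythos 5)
Your proof is correct and follows essentially the same route as the paper's: the lower bound comes from Lemma \ref{lem6} together with extending non-adjacent families from the subgraphs back into $D$, and the upper bound from the same three-way case analysis on whether $\{x_t,y_t\}$ lies in a maximizing family and whether $w(y_t)$ attains its maximum. The only caveat --- shared by the paper's own argument --- is that in the subcase $M'<w(y_t)$ your appeal to the formula for $\Theta(k-1,D)$ silently assumes $k\geq 2$ (since $\Theta(0,D)$ is defined to be $0$ rather than by the formula); at $k=1$ that subcase must instead be absorbed into the $\Theta(k,D\setminus N_D[x_t])+w(y_t)$ term, which works because the coefficient of $k-1$ vanishes there.
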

 \begin{proof}
Note that $\Theta(k,D\setminus \{x_t,y_t\}) \leq \Theta(k,D)$. Since set of all non-adjacent pairs $\{x_{i_j},y_{i_j}\}$ in $D\setminus N_D[x_t]$ are also non-adjacent to $\{x_t,y_t\}$ in $D$, then  $$\Theta(k,D\setminus N_D[x_t])+w(y_t) \leq \Theta(k,D).$$ 
Also, from Lemma \ref{lem6}, we have 
$$ \Theta(k-1,D)+w(y_t)+1 \leq \Theta(k,D) .$$
This implies that
 \begin{align*}
   \max\{\Theta(k,D\setminus N_D[x_t])+w(y_t),\Theta(k,D\setminus \{x_t,y_t\}),\Theta(k-1,D)+w(y_t)+1\} \leq \Theta(k,D). 
 \end{align*}
Now we show the other inequality. Suppose $\Theta(k,D)=(w(y_l))+1)(k-1)+ \sum_{j=1}^{s}w(y_{i_j})+1$, where  $w(y_l)=\max\{w(y_{i_1}), \ldots, w(y_{i_s}) \}$ for some $l$.
 If none of the $y_{i_j}$ is equal to $y_t$, then   
 $$ (k-1) (w(y_l))+1) +\sum_{j=1}^{s}w(y_{i_j})+1 \leq \Theta(k,D\setminus \{x_t,y_t\}).$$ If $y_{i_{j_0}}=y_t$ for some $i_{j_0}$ and $y_l \neq y_t$, then from the non-adjacency of edges $\{x_{i_j},y_{i_j}\}$ we get  
 \begin{align*}
   (k-1)(w(y_l))+1) + \sum_{j=1}^{s}w(y_{i_j})+1 &= (k-1)(w(y_l)+1)+\sum_{j=1,j\neq t}^{s}w(y_{i_j})+1 +w(y_t)\\
    &\leq \Theta(k,D\setminus N_D[x_t])+w(y_t) .
 \end{align*}
Now assume $y_{i_{j_0}}=y_{l}=y_t$. Then 
\begin{align*}
    (w(y_l)+1)(k-1)+\sum_{j=1}^{s}w(y_{i_j})+1 & = (k-1)(w(y_t)+1)+\sum_{j=1,j\neq t}^{s}w(y_{i_j})+1 \\
    & = (k-2)(w(y_t)+1) +\sum_{j=1,j\neq t}^{s}w(y_{i_j})+1+w(y_t)+1 \\
    & \leq \Theta(k-1,D)+w(y_t)+1 .
\end{align*}
Thus we get the inequality
  \begin{align*}
\Theta(k,D)\leq \max\{\Theta(k,D\setminus N_D[x_t])+w(y_t),\Theta(k,D\setminus \{x_t,y_t\}),\Theta(k-1,D)+w(y_t)+1\}.
\end{align*}
This proves the lemma.
 \end{proof}
\noindent
Now we prove the main result of this section. 
\begin{theorem}\label{thm1}
		Let $D$ be a weighted oriented unmixed forest. Let $\{\{x_1,y_1\}, \ldots, \{x_r,y_r\}\}$ be a perfect matching in the underlying graph $G$ of $D$. Suppose $y_1,\ldots,y_r$ are sinks. Then
\begin{align*}
   	\reg(I(D))=\Theta(1,D) 
		            =& \max\Bigg\{ \sum_{j=1}^{s}w(y_{i_j})+1 ~:\mbox{ none of the edges } \{x_{i_j},y_{i_j}\} \\ &\mbox{ are adjacent } , y_{i_j} \in V(D) \Bigg \}. 
\end{align*}
\end{theorem}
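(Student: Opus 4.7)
The plan is to prove the two inequalities $\reg(I(D)) \le \Theta(1,D)$ and $\reg(I(D)) \ge \Theta(1,D)$ separately, with the former by induction on $|V(D)|$ and the latter by a direct induced-subgraph argument.

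For the lower bound, I would pick a non-adjacent collection $\{x_{i_1},y_{i_1}\},\ldots,\{x_{i_s},y_{i_s}\}$ of matching edges realizing $\Theta(1,D)$. Because the edges are pairwise non-adjacent in the sense of Section \ref{sec2}, the induced weighted oriented subgraph $D'$ on $\bigcup_j \{x_{i_j},y_{i_j}\}$ consists of exactly these $s$ disjoint oriented edges, so $I(D')$ is a sum of principal ideals $(x_{i_j}y_{i_j}^{w(y_{i_j})})$ in pairwise disjoint variables. Iterating Lemma \ref{lem5}(1) gives $\reg(I(D')) = \sum_j w(y_{i_j})+1 = \Theta(1,D)$, and Lemma \ref{lem: induced} then yields $\reg(I(D)) \ge \reg(I(D')) = \Theta(1,D)$.

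For the upper bound, I would take as base case a single edge $(x,y)$ with $y$ a sink, so that $I(D)=(xy^{w(y)})$ and $\reg(I(D))=w(y)+1=\Theta(1,D)$. For the inductive step, I would first observe that deleting all the leaves $y_1,\ldots,y_r$ leaves a (necessarily nonempty) forest on $\{x_1,\ldots,x_r\}$, which admits a leaf $x_t$; hence $|N_D(x_t)|\le 2$. If $|N_D(x_t)|=1$, then $\{x_t,y_t\}$ is an isolated component of $D$, so $I(D)=(x_ty_t^{w(y_t)})+I(D\setminus\{x_t,y_t\})$ splits into disjoint variables, and Lemma \ref{lem5}(1) combined with the inductive hypothesis and the identity $\Theta(1,D) = w(y_t)+\Theta(1,D\setminus\{x_t,y_t\})$ closes this easy case. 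In the substantive case $N_D(x_t)=\{y_t,z\}$, Theorem \ref{thm3}(iii) forces $w(x_t)=1$, and since $z$ is a non-leaf matched to some sink leaf $y_j$, also $w(z)=1$; so the $x_t$--$z$ generator of $I(D)$ is simply $x_tz$ regardless of orientation. A direct computation then gives
\[
(I(D):x_t) \;=\; (y_t^{w(y_t)}) + (z) + I(D\setminus N_D[x_t]),
\]
whose three summands live in the pairwise disjoint variable sets $\{y_t\}$, $\{z\}$, and $V(D)\setminus N_D[x_t]$. Two applications of Lemma \ref{lem5}(1) yield $\reg((I(D):x_t)) = w(y_t) - 1 + \reg(I(D\setminus N_D[x_t]))$; the inductive hypothesis (applied to $D\setminus N_D[x_t]$ after discarding the now-isolated vertex $y_j$, which changes neither $I$ nor $\Theta(1,\cdot)$ and restores unmixedness) then replaces $\reg(I(D\setminus N_D[x_t]))$ by $\Theta(1,D\setminus N_D[x_t])$. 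Similarly $\reg((I(D),x_t)/(x_t))= \reg(I(D\setminus\{x_t,y_t\})) = \Theta(1,D\setminus\{x_t,y_t\})$. Plugging both into the short exact sequence
\[
0 \to (I(D):x_t)(-1) \xrightarrow{\;\cdot x_t\;} I(D) \to (I(D),x_t)/(x_t) \to 0
\]
and applying Lemma \ref{lem1}(1) yields $\reg(I(D)) \le \max\{w(y_t) + \Theta(1,D\setminus N_D[x_t]),\, \Theta(1,D\setminus\{x_t,y_t\})\}$. Comparing this with Lemma \ref{lm3.6} at $k=1$ (the third recursion term $\Theta(0,D)+w(y_t)+1 = w(y_t)+1$ is dominated by the first, since $\Theta(1,D\setminus N_D[x_t]) \ge 1$) finishes the upper bound.

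The main obstacle I foresee is the bookkeeping in the substantive case: correctly writing $(I(D):x_t)$ as a sum of ideals in pairwise disjoint variables (which is what crucially uses $w(x_t)=w(z)=1$), and handling the loss of the perfect matching in $D\setminus N_D[x_t]$ when invoking the inductive hypothesis. Both issues are manageable once one observes that the stray isolated vertex $y_j$ can be harmlessly deleted.
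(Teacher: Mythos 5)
Your proposal is correct, but it takes a genuinely different route from the paper on both halves. For the lower bound, the paper argues via the Mayer--Vietoris sequence for $J=I(D\setminus y_t)$, $K=(x_ty_t^{w(y_t)})$ and splits into cases according to whether $\reg(J\oplus K)$ or $\reg(J\cap K)$ dominates; your observation that the optimal non-adjacent collection induces a subgraph $D'$ that is a disjoint union of $s$ oriented edges, so that Lemma \ref{lem5}(1) gives $\reg(I(D'))=\sum_j w(y_{i_j})+1$ and Lemma \ref{lem: induced} finishes, is more direct and avoids that case analysis entirely. For the upper bound, the paper decomposes $I(D)=I(D\setminus y_t)+(x_ty_t^{w(y_t)})$, computes the intersection via Lemma \ref{lem2}, and controls $\reg(I(D\setminus y_t))$ through Proposition \ref{prop3}; you instead apply the colon sequence $0\to (I(D):x_t)(-1)\to I(D)\to (I(D),x_t)/(x_t)\to 0$ directly to $I(D)$ and exploit the clean decomposition $(I(D):x_t)=(y_t^{w(y_t)})+(z)+I(D\setminus N_D[x_t])$ into disjoint variable sets, which indeed hinges (as you note) on $w(x_t)=w(z)=1$ coming from Theorem \ref{thm3}(3) applied to the (unique) perfect matching of the forest. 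What the paper's heavier machinery buys is reusability: Lemma \ref{lem2} and Proposition \ref{prop3} are stated for all powers $k$ and are the backbone of Theorem \ref{thm2}, whereas $(I(D)^k:x_t)$ does not decompose into disjoint-variable summands, so your shortcut is specific to $k=1$. Two small points you should nail down but which are not gaps: the degenerate situation where $D\setminus N_D[x_t]$ has no edges (Lemma \ref{lem5} requires nonzero ideals, so treat $r=2$ or the edgeless case directly, with the convention that the empty collection contributes $1$ to $\Theta(1,\cdot)$), and the justification that the hypothesized matching coincides with the one in Theorem \ref{thm3}(3) (uniqueness of perfect matchings in forests), which is what gives $d_G(y_i)=1$ and hence that the only generators of $I(D)$ involving $x_t$ or $y_t$ are $x_ty_t^{w(y_t)}$ and $x_tz$.
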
 
\begin{proof}
		Let $D$ be a weighted oriented unmixed forest with the 
		perfect matching \\ $\{\{x_1,y_1\},\ldots,\{x_r,y_r\}\}$. Note that $r=\frac{\mid V(D) \mid } {2}$.  We prove the theorem by induction on $r$. Let $r=2$. We have that $I(D)=(x_1x_2,x_1y_1^{w(y_1)},x_2y_2^{w(y_2)})$. Write 
		$I(D)=J+K$, where $J=(x_1x_2,x_1y_1^{w(y_1)})$,  $K=(x_2y_2^{w(y_2)})$. Note that $J\cap K=(x_1x_2y_2^{w(y_2)})$. One can easily see that $\reg(J)=w(y_1)+1, \reg(K)=w(y_2)+1$ and  
		$\reg(J\cap K) =w(y_2)+2$. Now applying the regularity lemma to the short exact sequence, 
		\begin{equation} \label{eq1}
			0 \rightarrow J \cap K \rightarrow J \oplus K \rightarrow J+K \rightarrow 0,
		\end{equation}
		we get that 
		\begin{eqnarray*}
			\reg(I(D)) &=& \reg(J+K) \\
			&\leq& \max \{\reg(J\cap K)-1, \reg(J\oplus K)\} \\
			&=& \max\{w(y_1)+1,w(y_2)+1\}.  
		\end{eqnarray*}
		The other inequality is true because $I(D)$ has minimal generators of degrees $w(y_1)+1$ and 
		$w(y_2)+1$. Thus $\reg(I(D))=\max\{w(y_1)+1,w(y_2)+1\}$. Therefore the theorem is true for $r=2$.
		Assume  $r \geq 3$. Choose an edge $(x_t,y_t)$ such that $N_D(x_t)=\{z,y_t\}$. Such a choice of an edge always exists in $D$. Write
		$I(D)=J+K$, where $J=I(D\setminus y_t )$ and $ K=(x_ty_t^{w(y_t)})$. Note that $\reg(K)=w(y_t)+1$. By induction hypothesis apply on $D\setminus N_D[x_t]$ and $D\setminus \{x_t,y_t\}$  we get that 
		\begin{align*}
		    \reg(I(D\setminus \{x_t,y_t\}))=\Theta(1,D\setminus \{x_t,y_t\}) \text{ and }   \reg(I(D\setminus N_D[x_t]))=\Theta(1,D\setminus N_D[x_t]).
		\end{align*}
From Lemma \ref{lem2}(1) we have $ J \cap K  =  x_t y_t^{w(y_t)}[I(D\setminus N_D[x_t])+ (z)].$ This implies that 
\begin{align*}
\reg(J \cap K) = & \reg((z) + I(D\setminus N_D[x_t]))+w(y_t)+1 \\
			= & \reg(I(D\setminus N_D[x_t]))+1+w(y_t)+1-1 \mbox{ ( by Lemma \ref{lem5} (1) })\\ 
			= &\Theta(1,D\setminus N_D[x_t])+w(y_t)+1.
\end{align*} 
Now from Proposition \ref{prop3} with $k=1$ and $\reg(I(D\setminus N_D[x_t])) \geq 2$ we get that \\ 
$  \reg(I(D\setminus y_t) \leq  \max\{\reg(I(D\setminus N_D[x_t]))+1,\reg(I(D\setminus \{x_t,y_t\}))\}.$ 
Thus we have
\begin{align}\label{eq6}
\reg(J) = \reg(I(D\setminus y_t) \leq& \max\{\Theta(1,D\setminus N_D[x_t])+1,\Theta(1,D\setminus \{x_t,y_t\})\}.
\end{align}
\noindent
First, we will show that $\reg(I(D))\leq \Theta(1,D)$. Consider the exact sequence \eqref{eq1} and apply Lemma \ref{lem1}(3) we get that
\begin{align*}
\reg(I(D))= &\reg(J+K) \\
          \leq & \max\{\reg(J \cap K)-1, \reg(J \oplus K)\} \\
           = & \max\{\Theta(1,D \setminus N_D[x_t])+w(y_t), \reg(J), w(y_t)+1\} \\
           \leq& \max\{\Theta(1,D \setminus N_D[x_t])+w(y_t),\Theta(1,D \setminus N_D[x_t])+1, \Theta(1,D \setminus \{x_t,y_t\}) \} \\
           & \hspace{9 cm } ( \mbox{ from equation \eqref{eq6} } ) \\ 
            = &\max\{\Theta(1,D\setminus N_D[x_t])+w(y_t), \Theta(1,D\setminus \{x_t,y_t\}) \} \\
           = & \Theta(1,D) \mbox{ ( from Lemma \ref{lm3.6} } ).
\end{align*}
This implies that $\reg(I(D)) \leq \Theta(1,D).$
Now, we will show that $\Theta(1,D) \leq \reg(I(D))$ in two cases.\\
{\bf Case $1$:} Suppose $\reg(J \oplus K) \geq \reg(J \cap K)$. Then
\begin{align*}
\Theta(1,D\setminus N_D[x_t])+w(y_t)=& \reg(I(D\setminus N_D[x_t]))+w(y_t) \\
  =&\reg(J \cap K ) \\
  \leq & \reg(J \oplus K) \\
  \leq & \max\{\reg(J),\reg(K)\} \\
  \leq & \max\{\Theta(1,D\setminus N_D[x_t])+1,\Theta(1,D\setminus \{x_t,y_t\}),2\} \\
  \leq & \Theta(1,D\setminus \{x_t,y_t\}) \leq \reg(I(D)),  
\end{align*}
where the last inequality holds by the virtue of $\Theta(1,D\setminus \{x_t,y_t\})=\reg(I(D\setminus \{x_t,y_t\})$ and Lemma \ref{lem: induced}.
Thus from Lemma \ref{lm3.6}, we have 
$$ \Theta(1,D)=\max\{\Theta(1,D\setminus N_D[x_t])+w(y_t),\Theta(1,D\setminus \{x_t,y_t\})\} \leq  \reg(I(D)). $$
This implies that $\Theta(1,D) \leq \reg(I(D))$. \\
{\bf Case $2$:} Suppose $\reg(J\oplus K) < \reg(J\cap K)$. Then 
by applying Lemma \ref{lem1}(6) on exact sequence we get that $$\reg(I(D))=\reg(J+K)=\reg(J \cap K)-1=\Theta(1,D\setminus N_D[x_t])+w(y_t).$$
Since $D\setminus\{x_t,y_t\}$ is an induced subgraph of a $D$, then from Lemma \ref{lem: induced} we have 
$$\Theta(1,D\setminus\{x_t,y_t\})=\reg(1,D\setminus \{x_t,y_t\})\leq \reg(I(D)).$$ Thus, from Lemma \ref{lm3.6} we have
$$ \Theta(1,D)=\max\{\Theta(1,D\setminus \{x_t,y_t\}), \Theta(1,D \setminus N_D[x_t]))+w_t\} \leq \reg(I(D). $$ 
This finishes the proof. 
\end{proof}

\begin{example}
	Let $D$ be a Cohen-Macaulay weighted oriented graph with all leaves are sinks as below.
\begin{figure}[!h]
    \centering
     \includegraphics[width=0.3 \textwidth]{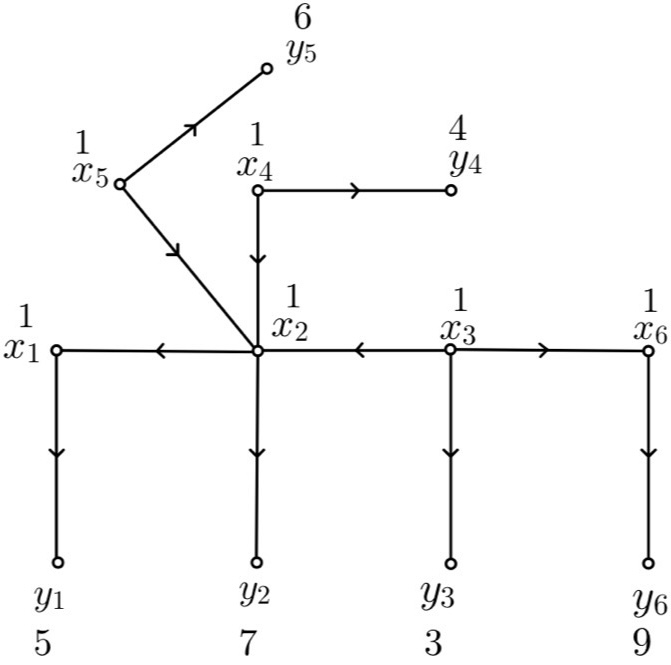} 
    \caption{Cohen-Macaulay weighted oriented forests}
    \label{fig:my_label}
\end{figure}  \\
	The weight of vertices are $w(x_i)=1$, for $1\leq i \leq 6$, $w(y_1)=5$, $w(y_2)=7$, $w(y_3)=3$, $w(y_4)=4$, $w(y_5)=6$, $w(y_6)=9$.The edge ideal of $D$ is 
	$$ I(D)=(x_1x_2, x_2x_3, x_2x_4, x_3x_6, x_2x_5, x_1y_1^5, x_2y_2^7, x_3y_3^3, x_4y_4^4, x_5y_5^6,x_6y_6^9).$$
 By the  Theorem \eqref{thm1}, we have 
\begin{eqnarray*}
\reg(I(D))&=&\max \{w(y_1)+w(y_3)+w(y_4) 
+w(y_5)+1,  w(y_1)+w(y_4) +w(y_5)+w(y_6)+1, \\ & & w(y_2)+w(y_6)+1 \} \\  
      &=& 25.  
\end{eqnarray*}
Also one can check that $\reg(I(D))=25$ by using Macaulay$2$, \cite{gs}.
\end{example}

\section{Castelnuovo-Mumford regularity of power of edge ideal of Cohen-Macaulay weighted oriented forests} \label{sec4}
In this section we prove a formula for $\reg(I(D)^k)$. To prove that we need some lemmas which are below.
 \begin{lemma}\label{lm3.8}
 Let $D$ be a weighted oriented unmixed forest. Let  $\{\{x_1,y_1\}, \ldots, \{x_r,y_r\}\}$ be a perfect matching in the underlying graph $G$ of $D$ such that $y_1,\cdots,y_r$ are sinks. Suppose $(x_t,y_t) \in E(D)$ such that $N_D(x_t)=\{y_t,z\}$ satisfying $$ \max\{\Theta(k-1,D) ,  \Theta(k,D\setminus N_D[x_t])\} \leq \Theta(k-1,D\setminus \{x_t,y_t\})+1. $$ Then 
 $$\Theta(k-1,D\setminus \{x_t,y_t\})+w(y_t)+1 \leq \Theta(k,D\setminus \{x_t,y_t\}). $$
\end{lemma}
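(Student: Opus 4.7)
The plan is to exhibit a specific non-adjacent matching collection realizing $\Theta(k-1,D\setminus\{x_t,y_t\})$, use it to produce a direct lower bound on $\Theta(k,D\setminus\{x_t,y_t\})$, and then extract a contradiction from the hypothesis whenever the necessary weight comparison fails.

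First I would fix a pairwise non-adjacent sub-collection of the perfect matching $\mathcal{M}=\{(x_{i_1},y_{i_1}),\dots,(x_{i_s},y_{i_s})\}$, living in $D\setminus\{x_t,y_t\}$, that realizes
\[
\Theta(k-1,D\setminus\{x_t,y_t\})=(w(y_l)+1)(k-2)+\sum_{j=1}^{s}w(y_{i_j})+1,
\]
with $w(y_l)=\max_j w(y_{i_j})$. Substituting $\mathcal{M}$ directly into the defining expression for $\Theta(k,D\setminus\{x_t,y_t\})$ gives
\[
\Theta(k,D\setminus\{x_t,y_t\})\ \geq\ (w(y_l)+1)(k-1)+\sum_{j=1}^{s}w(y_{i_j})+1\ =\ \Theta(k-1,D\setminus\{x_t,y_t\})+w(y_l)+1,
\]
so the lemma reduces to establishing $w(y_l)\geq w(y_t)$.

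Assume the contrary, $w(y_l)<w(y_t)$, and let $\rho$ be the index with $x_\rho=z$; then $(x_\rho,y_\rho)$ is the unique matching edge containing $z$, and hence the only matching edge adjacent to $(x_t,y_t)$ in $D$ without being so in $D\setminus\{x_t,y_t\}$. I would split into two sub-cases. If $(x_\rho,y_\rho)\notin\mathcal{M}$, the enlarged collection $\mathcal{M}\cup\{(x_t,y_t)\}$ is non-adjacent in $D$, and its value in the formula for $\Theta(k-1,D)$ exceeds that of $\mathcal{M}$ by $(w(y_t)-w(y_l))(k-2)+w(y_t)\geq w(y_t)\geq 2$, which contradicts the first part of the hypothesis $\Theta(k-1,D)\leq\Theta(k-1,D\setminus\{x_t,y_t\})+1$. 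If instead $(x_\rho,y_\rho)\in\mathcal{M}$, set $\mathcal{M}'=\mathcal{M}\setminus\{(x_\rho,y_\rho)\}$; no edge of $\mathcal{M}'$ contains $z$, so $\mathcal{M}'$ is non-adjacent in $D\setminus N_D[x_t]$ and $\mathcal{M}'\cup\{(x_t,y_t)\}$ is non-adjacent in $D$. Applying the first hypothesis to the value of $\mathcal{M}'\cup\{(x_t,y_t)\}$ in $\Theta(k-1,D)$, and the second hypothesis to the value of $\mathcal{M}'$ in $\Theta(k,D\setminus N_D[x_t])$, forces $w(y_\rho)=w(y_l)=w(y_t)-1$; a further analysis of the optimizer, augmenting $\mathcal{M}'$ by a matching edge made admissible through the removal of $(x_\rho,y_\rho)$, then contradicts the maximality of $\mathcal{M}$.

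The main obstacle will be the second sub-case: neither part of the hypothesis alone is strong enough to discard the assumption $w(y_l)<w(y_t)$, and the two bounds have to be combined with careful tracking of how dropping $(x_\rho,y_\rho)$ from $\mathcal{M}$ affects both the maximum weight and the running sum. The degenerate case $k=1$, where the first part of the hypothesis is vacuous, needs separate treatment and should be handled by invoking the structural consequences of Theorem \ref{thm3} on the small graph $D\setminus N_D[x_t]$.
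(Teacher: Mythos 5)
Your overall strategy is the same as the paper's: realize $\Theta(k-1,D\setminus\{x_t,y_t\})$ by an explicit non-adjacent collection $\mathcal{M}$, note that re-using $\mathcal{M}$ inside $\Theta(k,D\setminus\{x_t,y_t\})$ reduces the lemma to the weight comparison $w(y_l)\geq w(y_t)$, and rule out $w(y_l)<w(y_t)$ by swapping $\{x_t,y_t\}$ into the collection. Your first sub-case (the matching edge $\{z,y_\rho\}$ at $z$ is not in $\mathcal{M}$) is correct; the paper disposes of the analogous case by contradicting the second half of the hypothesis via $\Theta(k-1,D\setminus\{x_t,y_t\})\leq\Theta(k-1,D\setminus N_D[x_t])$, whereas you contradict the first half, and for $k\geq 2$ either route works.

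The gap is the second sub-case, $\{z,y_\rho\}\in\mathcal{M}$, which is where all the content of the lemma sits. Your deduction $w(y_\rho)=w(y_l)=w(y_t)-1$ from the first hypothesis is fine (it also forces $k=2$, since for $k\geq 3$ the increment $(k-2)(w(y_t)-w(y_l))+w(y_t)-w(y_\rho)$ already exceeds $1$), but the closing step --- ``augmenting $\mathcal{M}'$ by a matching edge made admissible through the removal of $(x_\rho,y_\rho)$ contradicts the maximality of $\mathcal{M}$'' --- is not an argument: such an admissible edge need not exist, and in this boundary configuration the desired inequality is off by exactly one with nothing left to augment. Concretely, let $D$ be the path $y_t-x_t-z-y_\rho$ with $w(y_t)=2$, $w(y_\rho)=1$ and all other weights $1$. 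Then $\Theta(1,D)=3$, $D\setminus N_D[x_t]$ has no edges so $\Theta(2,D\setminus N_D[x_t])=0$, and $\Theta(1,D\setminus\{x_t,y_t\})=2$, so the hypothesis holds for $k=2$; yet $\Theta(2,D\setminus\{x_t,y_t\})=4<5=\Theta(1,D\setminus\{x_t,y_t\})+w(y_t)+1$. So the conclusion cannot be extracted from the stated hypotheses in this sub-case, and no ``further analysis of the optimizer'' will produce the contradiction you want. For what it is worth, the paper's own treatment of this sub-case bounds $\sum_{n\neq\rho}w(y_{i_n})+1$ by $\Theta(1,D\setminus N_D[x_t])$ and then asserts a strict inequality $\Theta(k-1,D\setminus\{x_t,y_t\})+1<\Theta(k-1,D)$ which degenerates to equality in exactly the configuration $w(y_\rho)=w(y_t)-1$ above; so the obstacle you flagged is genuine, and closing it requires either a strict form of the hypothesis or an additional assumption excluding this degenerate case (your separate worry about $k=1$ is also confirmed by the same path).
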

\begin{proof}
Suppose $\Theta(k-1,D\setminus \{x_t,y_t\})=(k-2)(w(y_{i_u})+1)+ \sum_{n=1}^{s}w(y_{i_n})+1$, for some $y_{i}'s$ and  $w(y_{i_u})=\max\{w(y_{i_1}), \ldots, w(y_{i_s}) \}$.
 Then we have
\begin{align*}
\Theta(k-1,D\setminus \{x_t,y_t\}) +w(y_t)+1= & (k-2)(w(y_{i_u})+1)+ \sum_{n=1}^{s}w(y_{i_n})+1+(w(y_t)+1) \\
         = & (k-1)(w(y_{i_u})+1)+ \sum_{n=1}^{s}w(y_{i_n})+1+w(y_t)-w(y_{i_u}) \\
         \leq & \Theta(k,D)+w(y_t)-w(y_{i_u}).
\end{align*}
If possible, suppose $y_j \not \in  \{y_{i_1},\ldots, y_{i_s}\}$ for $(z,y_j) \in E(D)$ as $N_D(x_t)=\{z,y_t\}$, then 
\begin{align*}
  \Theta(k-1,D\setminus \{x_t,y_t\})=(k-2)(w(y_{i_u})+1)+ \sum_{n=1,i_n \neq j}^{s}w(y_{i_n})+1\leq  \Theta(k-1,D\setminus N_D[x_t]).  
\end{align*}
Thus from $\Theta(k-1,D\setminus N_D[x_t])+2 \leq \Theta(k,D\setminus N_D[x_t])$  $$\Theta(k-1,D\setminus \{x_t,y_t\})+1 \leq \Theta(k-1,D\setminus N_D[x_t])+1 < \Theta(k,D\setminus N_D[x_t]) $$ which contradict our assumption $ \Theta(k,D\setminus N_D[x_t]) \leq \Theta(k-1,D\setminus \{x_t,y_t\})+1. $ \\
Therefore $y_j \in  \{y_{i_1},\ldots, y_{i_s}\}$. This implies that 
$$ \sum_{n=1,i_n \neq j}^{s}w(y_{i_n})+1 + w(y_j)\leq  \Theta(1, D\setminus \{N_D[x_t]\cup N_D[z]\}) +w(y_j) \leq \Theta(1, D\setminus N_D[x_t])+w(y_j) $$ for $(z,y_j) \in E(D)$ as $N_D(x_t)=\{z,y_t\}$. If possible suppose $w(y_t) > w(y_{i_u})$, then we have
\begin{align*}
  \Theta(k-1,D\setminus \{x_t,y_t\}) + 1= & \: (k-2)(w(y_{i_u})+1)+ \sum_{n=1,i_n \neq j}^{s}w(y_{i_n})+1 +w(y_j) +1  \\
              < & \: (k-2)(w(y_t)+1)+ \Theta(1, D\setminus N_D[x_t]) + w(y_t) \\ & \hspace{5.2cm} {\mbox{ (since } w(y_t) > w(y_{i_u})  \geq w(y_j) )} \\
              \leq & \: \Theta(k-1,D)
\end{align*}
where the last inequality holds because set of all non-adjacent pairs $\{x_{i_j},y_{i_j}\}$ in $D\setminus N_D[x_t]$ are also non-adjacent to $\{x_t,y_t\}$ in $D$. Therefore we have $\Theta(k-1,D \setminus \{x_t,y_t\})+ w(y_t)+1 < \Theta(k,D)$ which contradict our assumption that
$ \Theta(k,D) \leq \Theta(k-1,D \setminus \{x_t,y_t\})+ w(y_t)+1 .$ Hence $w(y_t)\leq w(y_{i_u}).$
This implies that $$ \Theta(k-1,D\setminus \{x_t,y_t\})  + w(y_t)+1 \leq \Theta(k,D\setminus \{x_t,y_t\}).$$
\end{proof}
 
 \begin{lemma}\label{lm3.9}
Let $D$ be a weighted oriented unmixed forest. Let  $\{\{x_1,y_1\}, \cdots, \{x_r,y_r\}\}$ be a perfect matching in the underlying graph $G$ of $D$ such that $y_1,\ldots,y_r$ are sinks. Suppose $(x_t,y_t) \in E(D)$   such that $N_D(x_t)=\{y_t,z\}$ satisfying 
\begin{equation}\label{eq:hash}
    \max\{\Theta(k,D\setminus \{x_t,y_t\}), \Theta(k,D\setminus N_D[x_t])+w(y_t)\} < \Theta(k-1,D)+w(y_t)+1 \tag{\#}
\end{equation}
for some $ k \geq 1 $. Then
 $$\Theta(k-1,D)+w(y_t)+1 = (k-1)(w(y_t)+1) + \Theta(1,D\setminus N_D[x_t])+w(y_t). $$
\end{lemma}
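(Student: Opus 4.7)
The strategy is to recast the desired equation as an identity for $\Theta(k,D)$ and then exploit the strict inequality in \eqref{eq:hash} to pin down the structure of every non-adjacent collection of matching edges realizing $\Theta(k,D)$. By Lemma \ref{lm3.6}, $\Theta(k,D)$ equals the maximum of the three quantities $\Theta(k,D\setminus N_D[x_t]) + w(y_t)$, $\Theta(k,D\setminus\{x_t,y_t\})$, and $\Theta(k-1,D) + w(y_t) + 1$. The hypothesis \eqref{eq:hash} forces the first two to be strictly below the third, so $\Theta(k,D) = \Theta(k-1,D) + w(y_t) + 1$. Hence the lemma reduces to proving the identity
\[
\Theta(k,D) = (k-1)(w(y_t)+1) + w(y_t) + \Theta(1, D\setminus N_D[x_t]),
\]
from which the stated equation follows by subtracting $w(y_t)+1$ from both sides.

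The central step, and the main technical obstacle, is to show that every maximizing collection $C^*$ for $\Theta(k,D)$ contains the edge $\{x_t,y_t\}$ with $w(y_t)$ as its maximum $y$-weight. I will employ the three-case classification implicit in the proof of Lemma \ref{lm3.6}: for any non-adjacent collection $C$, either (i) none of its $y$-vertices equals $y_t$; (ii) $\{x_t,y_t\}\in C$ but the maximum $y$-weight of $C$ exceeds $w(y_t)$; or (iii) $\{x_t,y_t\}\in C$ and its maximum $y$-weight equals $w(y_t)$. The value of $C$ is bounded above, respectively, by $\Theta(k,D\setminus\{x_t,y_t\})$, $\Theta(k,D\setminus N_D[x_t]) + w(y_t)$, and $\Theta(k-1,D) + w(y_t) + 1$, exactly as in the proof of Lemma \ref{lm3.6}. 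Under \eqref{eq:hash}, the first two bounds are strictly less than $\Theta(k,D)$, so a realizing $C^*$ must be of type (iii).

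Given such a $C^*$, I decompose $C^* = \{\{x_t,y_t\}\}\cup B$, where every edge of $B$ is non-adjacent to $\{x_t,y_t\}$ and so $B$ is a non-adjacent collection of matching edges contained in $D\setminus N_D[x_t]$. Its value is $(k-1)(w(y_t)+1) + w(y_t) + \bigl(\sum_{e\in B}w(y_e)+1\bigr)$, and since $\sum_{e\in B}w(y_e)+1 \leq \Theta(1,D\setminus N_D[x_t])$, I obtain the upper bound
\[
\Theta(k,D) \leq (k-1)(w(y_t)+1) + w(y_t) + \Theta(1,D\setminus N_D[x_t]).
\]

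For the reverse inequality, take a collection $C^M$ realizing $\Theta(1,D\setminus N_D[x_t])$ with maximum weight $W^M$ and augment it by $\{x_t,y_t\}$, obtaining a non-adjacent collection in $D$. If $W^M > w(y_t)$, the value $(k-1)(W^M+1) + w(y_t) + \Theta(1,D\setminus N_D[x_t])$ of this augmented collection would strictly exceed the upper bound just established, contradicting the fact that it is bounded by $\Theta(k,D)$. Hence $W^M \leq w(y_t)$, and the augmented collection delivers $(k-1)(w(y_t)+1) + w(y_t) + \Theta(1,D\setminus N_D[x_t]) \leq \Theta(k,D)$. Combining the two bounds gives the required identity for $\Theta(k,D)$, and the rearrangement from the first paragraph then yields the lemma.
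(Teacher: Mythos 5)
Your proof is correct, but it reaches the conclusion by a route that differs from the paper's in two respects. The paper works directly with a collection realizing $\Theta(k-1,D)$: it first invokes Lemma \ref{lm3.8} to rule out $y_t$ being absent from that collection, then uses \eqref{eq:hash} again to force $y_t$ to carry the maximal weight, and finally rearranges the resulting expression --- asserting, without an explicit argument, that the residual sum $\sum_{n,\, j_n\neq t} w(y_{j_n})+1$ equals $\Theta(1,D\setminus N_D[x_t])$. You instead pivot on $\Theta(k,D)$: Lemma \ref{lm3.6} together with \eqref{eq:hash} immediately gives $\Theta(k,D)=\Theta(k-1,D)+w(y_t)+1$, after which you classify the maximizers of $\Theta(k,D)$ by the same three-way case analysis that underlies the proof of Lemma \ref{lm3.6}, so Lemma \ref{lm3.8} is never needed. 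Your approach also buys a cleaner endgame: the identification of the residual sum with $\Theta(1,D\setminus N_D[x_t])$ is established by a genuine two-sided exchange argument (the upper bound from the decomposition $C^*=\{\{x_t,y_t\}\}\cup B$, and the lower bound from augmenting a maximizer of $\Theta(1,D\setminus N_D[x_t])$ by $\{x_t,y_t\}$ after checking that its maximal weight cannot exceed $w(y_t)$), a point the paper leaves implicit. Two cosmetic caveats: the final ``subtracting $w(y_t)+1$ from both sides'' is unnecessary, since the stated equation is literally your identity for $\Theta(k,D)$ with $\Theta(k,D)$ replaced by $\Theta(k-1,D)+w(y_t)+1$; and for $k=1$ the step forcing $W^M\le w(y_t)$ degenerates because the factor $(k-1)$ vanishes, but there that claim is not needed for the augmented collection to deliver the lower bound, so the argument still closes.
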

\begin{proof}
 Suppose $\Theta(k-1,D)=(k-2)(w(y_{j_i})+1)+\sum_{n=1}^{n=s}w(y_{j_n})+1 $ for some $y_i's$ and $w(y_{j_i})= \max\{y_1, \ldots, y_r \}$. If possible, suppose $y_t \not \in \{y_{j_1}, \ldots , y_{j_s}\}$.  Then $\Theta(k-1,D) = \Theta(k-1, D\setminus \{x_t,y_t\}) .$ Note that from equation \eqref{eq:hash} we get that $\Theta(k,D\setminus N_D[x_t]) \leq \Theta(k,D) $. This implies that $$ \max\{\Theta(k-1,D\setminus \{x_t,y_t\}), \Theta(k,D\setminus N_D[x_t]) \} < \Theta(k,D\setminus \{x_t,y_t\})+1 .  $$
 Thus from Lemma \ref{lm3.8}, we get that 
 $ \Theta(k-1,D)+w(y_t)+1 \leq \Theta(k,D\setminus \{x_t,y_t\}) $ which contradicts equation \eqref{eq:hash}. Now, if possible, suppose
 $y_t \neq y_{j_i}$. Then $w(y_t) \leq w(y_{j_i})$.
 Consider,
 \begin{align*}
 \Theta(k-1,D)+w(y_t)+1 = & \: (k-2)(w(y_{j_i})+1)+\sum_{n=1}^{n=s}w(y_{j_n}) + 1                           +(w(y_t)+1) \\
                        \leq & \: (k-2)(w(y_{j_i})+1) + \sum_{n=1}^{n = s } w(y_{j_n}) + 1 + (w(y_{j_i})+1) \\
                        = & \: (k-1)(w(y_{j_i})+1) + \sum_{n=1, j_n \neq t }^{n = s} w(y_{j_n}) + 1 + w(y_t) \\
                        \leq & \: \Theta(k,D\setminus N_D[x_t])+w(y_t)
 \end{align*}
 which contradict equation \eqref{eq:hash}.  Therefore $ y_t = y_{j_i} .$ Thus 
\begin{align*}
    \Theta(k-1,D)+w(y_t)+1 = &  (k-2) (w(y_t)+1) + \sum_{n=1}^{n=s}w(y_{j_n}) + 1                                                    +(w(y_t)+1) \\
                           = &    (k-1) (w(y_t)+1) + \sum_{n=1, j_n \neq t}^{n=s}w(y_{j_n}) + 1                           + w(y_t) \\
                           = & (k-1) (w(y_t)+1) + \Theta(1,D\setminus N_D[x_t])+w(y_t) .  
\end{align*}
This finishes the proof.
\end{proof}
 
We prove the main result of this section.
\begin{theorem}\label{thm2}
	Let $D$ be an oriented unmixed forest. Let $\{\{x_1,y_1\},\ldots, \{x_r,y_r\}\}$ be a perfect matching in the underlying graph $G$ of $D$. Suppose $y_1,\ldots,y_r$ are sinks. Then for any $k\geq 1$, 
\begin{align*}
  \reg(I(D)^k)=\Theta(k,D)=&\max \Bigg\{ (k-1) (\max\{w(y_{i_j})\}+1) +  \sum_{j=1}^{s}w(y_{i_j})+1 : \mbox{ none of the} \\ & \mbox{ edges }   \{x_{i_j},y_{i_j}\}
  \mbox{ are adjacent }, y_{i_j} \in V(D)    \Bigg\}, \mbox{ for all } k \geq 1.
\end{align*}
\end{theorem}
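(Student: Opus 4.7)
The proof follows the same scheme as Theorem \ref{thm1}, extended to powers by double induction on the exponent $k$ and on the matching size $r = |V(D)|/2$. The base case $k = 1$ is Theorem \ref{thm1}, and $r = 1$ is direct: $I(D)^k = (x_1 y_1^{w(y_1)})^k = (x_1^k y_1^{k w(y_1)})$ is principal with $\reg = k(w(y_1)+1) = \Theta(k, D)$. For the inductive step with $r \geq 2$ and $k \geq 2$, I pick a penultimate edge $(x_t, y_t)$ with $N_D(x_t) = \{y_t, z\}$ (such an edge exists since $D$ is a forest) and set $J := I(D \setminus y_t)^k$, $K := x_t y_t^{w(y_t)} I(D)^{k-1}$, so that $I(D)^k = J + K$ and there is the short exact sequence
\[
0 \to J \cap K \to J \oplus K \to I(D)^k \to 0.
\]

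For the \textbf{upper bound} $\reg(I(D)^k) \leq \Theta(k, D)$, I bound each term. First, $\reg(K) = w(y_t) + 1 + \reg(I(D)^{k-1}) = w(y_t) + 1 + \Theta(k-1, D)$ by induction on $k$, and this is at most $\Theta(k, D)$ by Lemma \ref{lem6}. Next, Lemma \ref{lem2} decomposes $J \cap K = A + B$ with intersection $A \cap B = z x_t y_t^{w(y_t)} I(D \setminus N_D[x_t])^k$; applying the regularity lemma to the secondary short exact sequence, together with the induction on $r$ (giving $\reg(I(D \setminus N_D[x_t])^k) = \Theta(k, D \setminus N_D[x_t])$) and the induced-subgraph bound $\reg(I(D \setminus y_t)^{k-1}) \leq \reg(I(D)^{k-1}) = \Theta(k-1, D)$ from Lemma \ref{lem: induced}, yields $\reg(J \cap K) \leq \Theta(k, D) + 1$. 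Finally, Proposition \ref{prop3} and the same inductive ingredients give $\reg(J) \leq \Theta(k, D)$. The regularity lemma applied to the primary SES then yields $\reg(I(D)^k) \leq \Theta(k, D)$.

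For the \textbf{lower bound} $\reg(I(D)^k) \geq \Theta(k, D)$, I use Lemma \ref{lm3.6} to identify which of the three terms achieves the maximum defining $\Theta(k, D)$. If the maximum is $\Theta(k, D \setminus \{x_t, y_t\})$, induction on $r$ and Lemma \ref{lem: induced} conclude immediately. Otherwise I consider the induced subgraph $D''$ of $D$ obtained by deleting $z$ and its matching partner $y_j$; since $y_j$ is a leaf whose only neighbor was $z$, $D''$ is the disjoint union of the edge $(x_t, y_t)$ and the forest $D \setminus N_D[x_t]$. One checks $D''$ still satisfies the hypotheses of the theorem (with matching $\{(x_t, y_t)\} \cup \{(x_i, y_i) : i \neq t, j\}$ of size $r-1$), so by induction $\reg(I(D'')^k) = \Theta(k, D'')$. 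A combinatorial comparison, augmenting a witness in $D \setminus N_D[x_t]$ by the edge $(x_t, y_t)$, then shows $\Theta(k, D'') \geq \Theta(k, D)$, and Lemma \ref{lem: induced} concludes.

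The \textbf{main obstacle} is establishing $\Theta(k, D'') \geq \Theta(k, D)$ in the subcase where the maximum in Lemma \ref{lm3.6} is attained strictly by $\Theta(k-1, D) + w(y_t) + 1$; this case is absent for $k = 1$ and is precisely why the technical Lemmas \ref{lm3.8} and \ref{lm3.9} were developed. Under the strict-dominance hypothesis, Lemma \ref{lm3.9} rewrites the dominating term as $(k-1)(w(y_t)+1) + \Theta(1, D \setminus N_D[x_t]) + w(y_t)$ and, crucially, its proof shows that $w(y_t)$ is the maximum weight on the relevant witness set. This matches exactly the $\Theta(k, D'')$-contribution of augmenting a witness for $\Theta(1, D \setminus N_D[x_t])$ by the edge $(x_t, y_t)$ in $D''$, with the $(k-1)$ copies of $w(y_t)+1$ arising from $(x_t, y_t)$ serving as the maximum-weight edge of the augmented witness; Lemma \ref{lm3.8} supplies the parallel combinatorial control needed to handle boundary subcases where two of the three terms in Lemma \ref{lm3.6} tie.
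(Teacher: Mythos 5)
Your proposal is correct, and while the upper bound follows the paper's route essentially verbatim (the decomposition $I(D)^k=J+L$ with $L=x_ty_t^{w(y_t)}I(D)^{k-1}$, the secondary splitting $J\cap L=M+N$ from Lemma \ref{lem2}, Proposition \ref{prop3} applied recursively, and Lemma \ref{lm3.6} to collapse the resulting maximum to $\Theta(k,D)$), your lower bound takes a genuinely different and arguably cleaner path. The paper proves the three inequalities $\Theta(k,D\setminus\{x_t,y_t\})$, $\Theta(k-1,D)+w(y_t)+1$, $\Theta(k,D\setminus N_D[x_t])+w(y_t)\leq\reg(I(D)^k)$ separately, and in the hardest configuration (your strict-dominance case, i.e.\ hypothesis \eqref{eq:hash}) it resorts to a Betti splitting of the polarization $(I(D)^k)^{\PP}=I_1^{\PP}+I_2^{\PP}$ with $I_2=(x_ty_t^{w(y_t)})^k$, invoking \cite{cha19} and a delicate comparison of $\reg(J\oplus L)$ with $\reg(J\cap L)$ and of $\reg(M\oplus N)$ with $\reg(M\cap N)$. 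You instead realize the extremal value of $\Theta(k,D)$ inside the induced subgraph $D''=D\setminus\{z,y_j\}$ of matching size $r-1$ and conclude by induction on $r$ together with Lemma \ref{lem: induced}; the key point, which you correctly isolate, is that under \eqref{eq:hash} Lemma \ref{lm3.9} rewrites $\Theta(k,D)$ as $(k-1)(w(y_t)+1)+\Theta(1,D\setminus N_D[x_t])+w(y_t)$, which is attained in $D''$ by augmenting a witness for $\Theta(1,D\setminus N_D[x_t])$ (necessarily disjoint from $\{z,y_j\}$) by the now-isolated edge $\{x_t,y_t\}$ --- and in fact only the inequality $\max\{w(y_t),\max_S w\}\geq w(y_t)$ is needed, so the statement of Lemma \ref{lm3.9} suffices without appealing to its proof. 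This sidesteps the polarization and Betti-splitting machinery entirely, at the cost of checking (routinely, via Theorem \ref{thm3}) that $D''$ inherits unmixedness, the perfect matching, and the sink condition. Two small points to fix in a write-up: $D''$ is the disjoint union of the edge $\{x_t,y_t\}$ with $D\setminus(N_D[x_t]\cup\{y_j\})$ rather than with $D\setminus N_D[x_t]$ (harmless, since $y_j$ is isolated in $D\setminus N_D[x_t]$ and contributes nothing to $\Theta$ or to the edge ideal), and the case where the maximum in Lemma \ref{lm3.6} is attained by $\Theta(k,D\setminus N_D[x_t])+w(y_t)$ should be spelled out with the same augmentation argument in $D''$ (it works for the same reason, since any witness in $D\setminus N_D[x_t]$ is non-adjacent to $\{x_t,y_t\}$).
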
 
\begin{proof}
Let $D$ be a weighted oriented unmixed forest with the perfect matching \\ 
$\{\{x_1,y_1\},\ldots,\{x_r,y_r\}\}$. We prove the theorem by induction on $k$. If $k=1$, then the  theorem is true by Theorem \eqref{thm1}. Assume $k\geq 2$. Choose an edge $(x_t,y_t)$ such that $N_D(x_t)=\{z,y_t\}$. Such a choice of an edge always exists in $D$. Write 
\begin{eqnarray*}
 I(D)^k &=&(I(D\setminus y_t)+(x_t y_t^{w(y_t)}))^k \\
		&=& \sum_{i+j=k,i,j \geq 0,}I(D\setminus y_t)^i (x_t y_t^{w(y_t)})^j, \\
	    &=& I(D\setminus y_t)^k+ x_t y_t^{w(y_t)}\sum_{i+j=k-1,i,j \geq 0}I(D\setminus y_t)^i (x_t y_t^{w(y_t)})^j, \\
	    &=& I(D \setminus y_t)^k + x_t y_t^{w(y_t)}I(D)^{k-1}, \\
	    &=& J+L,  
\end{eqnarray*}
where $J=I(D \setminus y_t)^k$ and $L=x_t y_t^{w(y_t)}I(D)^{k-1}.$ To show the  $\Theta(k,D)=\reg(I(D)^k)$, we prove by induction on $\frac{\mid V(D) \mid}{2}$. Suppose $\frac{\mid V(D) \mid}{2} =1$. Then $D$ has only one edge $\{x_t,y_t\}.$ Then $\reg(I(D)^k)=k(w(y_t)+1)$ for all $k\geq 1$. Assume $\frac{\mid V(D) \mid}{2} \geq 2$ and the induction hypothesis.  
By induction hypothesis we have 
		\begin{align*}
		    \reg(I(D\setminus \{x_t,y_t\})^k)=\Theta(k,D\setminus \{x_t,y_t\}) \text{ and }   \reg(I(D\setminus N_D[x_t])^k)=\Theta(k,D\setminus N_D[x_t]).
		\end{align*}
By Proposition \ref{prop3} we have
\begin{align*}
    \reg(I(D\setminus y_t)^k \leq & \: \max\{\reg(I(D\setminus y_t)^{k-1})+2,
     \reg(I(D\setminus N_D[x_t])^k)+1,
      \reg(I(D\setminus \{x_t,y_t\})^k)\} \\
      \leq & \: \max\{ \reg(I(D\setminus \{x_t,y_t\})^k), \reg(I(D\setminus         \{x_t,y_t\})^{k-1}+2,\ldots,\reg(I(D\setminus \{x_t,y_t\}) \\ 
         & \: +2(k-1) ,
     \reg(I(D\setminus N_D[x_t])^k)+1,\reg(I(D\setminus N_D[x_t])^{k-1}+1+2,\ldots,  \\
     & \: \reg(I(D\setminus N_D[x_t])+1+2(k-1) \} \mbox{ (using Proposition \ref{prop3} recursively })\\
       =  & \: \max\{ \Theta(k,D\setminus \{x_t,y_t\}), \Theta(k-1,D\setminus \{x_t,y_t\})+2,\ldots,\Theta(1,D\setminus \{x_t,y_t\}) \\ 
    & \: +2(k-1) ,
     \Theta(k,D\setminus N_D[x_t])+1,\Theta(k-1,D\setminus N_D[x_t])+1+2,\ldots, \\
     & \: \Theta(1,D\setminus N_D[x_t])+1+2(k-1) \} 
\end{align*}
Thus $\Theta(k,D\setminus \{x_t,y_t\})-\Theta(k-1,D\setminus \{x_t,y_t\}) \geq 2 $,  $\Theta(k,D\setminus N_D[x_t]) - \Theta(k-1,D\setminus N_D[x_t])  \geq 2$  give that
\begin{align}\label{eq4.1}
   \reg(J)= \reg(I(D\setminus y_t)^k \leq & \max\{\Theta(k,D\setminus N_D[x_t])+1, \Theta(k,D\setminus \{x_t,y_t\}) \}.
\end{align}
Also we have $\reg(L)=\reg(x_ty_t^{w(y_t)}I(D)^{k-1})=\reg(I(D)^{k-1})+w(y_t)+1$. Then by induction
\begin{align}\label{eq4.2} 
\reg(L)=\Theta(k-1,D)+w(y_t)+1.
\end{align}
Recall that $I(D)^k = J+L$, where $J=I(D\setminus y_t)^k$ and $L=x_t y_t^{w(y_t)}I(D)^{k-1}.$
 Then we have 
 \begin{eqnarray*}
J \cap L &=& I(D\setminus y_t)^k \cap x_t y_t^{w(y_t)}I(D)^{k-1} \\
         &=& z x_t  y_t^{w(y_t)}  I(D\setminus y_t)^{k-1}+x_t y_t^{w(y_t)} I(D\setminus N_D[x_t])^k  \mbox{ ( By Lemma \ref{lem2}(1) )}\\
         &=& M+N, 
 \end{eqnarray*}
  where  $M= z x_t  y_t^{w(y_t)} I(D\setminus y)^{k-1}$  and $N=x_t y_t^{w(y_t)} I(D\setminus N_D[x_t])^k.$
  Note that
$$ \reg(N) = \reg(x_t y_t^{w(y_t)}I(D\setminus N_D[x_t])^{k})   
          = \reg(I(D\setminus N_D[x_t])^k)+w(y_t)+1  $$ 
Thus by the induction hypothesis, we have
\begin{align}\label{eq4.4}
\reg(N)= \Theta(k,D\setminus N_D[x_t])+w(y_t)+1.
\end{align}
 Now, 
 \begin{align*}
\reg(M) = & \reg( z x_t y_t^{w(y_t)}I(D\setminus y_t)^{k-1}) \\
        = & \reg(I(D\setminus y_t)^{k-1})+w(y_t)+2  \\
        \leq & \max\{\Theta(k-1,D\setminus \{x_t,y_t \}),\Theta(k-1,D\setminus N_D[x_t])+1\}+w(y_t)+2 \\
         & \hspace{10cm} \mbox{ ( from equation \eqref{eq4.1} )}.
\end{align*}
Thus $ \Theta(k,D\setminus N_D[x_t])- \Theta(k-1,D\setminus N_D[x_t]) \geq 2 $ and equation\eqref{eq4.4} give
\begin{align}\label{eq4.0}
  \reg(M) \leq \max\{\Theta(k-1,D\setminus \{x_t,y_t \}) + w(y_t) + 2, \Theta(k,D\setminus N_D[x_t]) + w(y_t) + 1 \}.  
\end{align}
Since $\Theta(k-1,D\setminus \{x_t,y_t\}) \leq \Theta(k-1,D)$ and $\Theta(k-1,D\setminus N_D[x_t])+1 \leq \Theta(k-1,D)$, then 
equation \eqref{eq4.0} implies that 
\begin{align}\label{eq4.3}
    \reg(M) \leq  \Theta(k-1,D)+w(y_t)+2 .  
\end{align}

The Lemma \ref{lem2} gives 
  $M \cap N= z x_t  y_t^{w(y_t)}  I(D\setminus N_D[x_t])^{k}$. Then we have 	
 $$ \reg(M \cap N) = \reg( z x_t  y_t^{w(y_t)}  I(D\setminus    N_D[x_t])^k) 
              =\reg(I(D\setminus N_D[x_t])^k)+w(y_t) +2. $$
\begin{align}\label{eq4.5}
\reg(M \cap N) =  \Theta(k,D\setminus N_D[x_t])+w(y_t)+2.
\end{align}              
Now consider two short exact sequences, \begin{equation} \label{eq10}
			0 \rightarrow M \cap N \rightarrow M \oplus N \rightarrow M+N \rightarrow 0,
		\end{equation}
		\begin{equation} \label{eq11}
			0 \rightarrow J \cap L \rightarrow J \oplus L \rightarrow J+L \rightarrow 0,
		\end{equation}
and apply the Lemma \ref{lem1}(3), we get that
\begin{align*}
\reg(I(D)^k)= & \reg(J+L) \\ \leq & \max\{\reg(J \cap L)-1,\reg(J \oplus L)\} \\
          \leq & \max\{\reg(M+N)-1, \reg(J), \reg(L)\}  
          \mbox{  (because } J \cap L= M + N ) \\
           \leq & \max\{\reg(M \cap N)-2, \reg(M \oplus N)-1,\reg(J),\reg(L)\} \\
          =& \max\{\reg(M \cap N)-2, \reg(M)-1, \reg(N)-1,\reg(J),\reg(L)\}
 \end{align*}
From equation \eqref{eq4.1},\eqref{eq4.2}, \eqref{eq4.4}, \eqref{eq4.3} and \eqref{eq4.5} we get that
\begin{align*}
\reg(I(D)^k)= &\reg(J+L)\\ 
         \leq & \max\{\Theta(k,D\setminus N_D[x_t]) + w(y_t),\Theta(k-1,D) + w(y_t)+1, \Theta(k,D\setminus \{x_t,y_t\}\} \\
            = & \: \Theta(k,D) \mbox{ ( from Lemma \ref{lm3.6} } ).
\end{align*}
This implies that $\reg(I(D)^k) \leq \Theta(k,D).$
 To show that $\Theta(k,D) \leq \reg(I(D)^k),$ it suffices to show that
\begin{enumerate}[label=(\roman*)]
    \item $\Theta(k,D\setminus \{x_t,y_t\}) \leq \reg(I(D)^k)$,
    \item $\Theta(k-1,D)+w(y_t)+1 \leq \reg(I(D)^k),$
    \item $\Theta(k,D\setminus N_D[x_t])+w(y_t) \leq \reg(I(D)^k),$
\end{enumerate}
by using Lemma \ref{lm3.6}.
Note that  (\romannum{1}) is always true using Lemma \ref{lem: induced}  as $ D\setminus \{x_t,y_t\} $ is an induced subgraph of $D$.  
 To show (\romannum{2}), we will take three possible cases.\\
 {\bf Case 1:} Suppose  $\reg(J \oplus L) > \reg(J \cap L).$ Then applying regularity Lemma \ref{lem1}(7) on exact sequence \eqref{eq11} we get $\reg(I(D)^k) = \reg(J \oplus L).$ Thus from equation \eqref{eq4.2} and $\reg(L) \leq \reg(J \oplus L)$ we get that $ \Theta(k-1,D)+w(y_t)+1 \leq \reg(I(D)^k).$ \\
 \noindent
{\bf Case 2:} Suppose $\reg(J \oplus L) < \reg(J \cap L).$ Then applying regularity Lemma \ref{lem1}(6)  on exact sequence \eqref{eq11} we get that $\reg(I(D)^k)=\reg(J \cap L) -1 $. Thus from equation \eqref{eq4.2} and $\reg(J \oplus L) \leq \reg(J \cap L)-1 $ we get that $\Theta(k-1,D)+w(y_t)+1 \leq \reg(I(D)^k) .$ \\
\noindent
{\bf Case 3:} Suppose $\reg(J \oplus L)=\reg(J \cap L).$ Then applying regularity Lemma \ref{lem1}(3) on exact sequence \eqref{eq11} we get that 
 \begin{align*}
   \reg(I(D)^k) \leq & \max\{\reg(J \cap L) -1, \reg(J\oplus L)  \} 
                = \reg(J \oplus L) 
 \end{align*}
 Thus from equations \eqref{eq4.1} and \eqref{eq4.2} we have 
\begin{align}\label{eq15}
     \reg(I(D)^k) \leq \max\{\Theta(k,D\setminus \{x_t,y_t\}),\Theta(k,D\setminus N_D[x_t])+ 1 , \Theta(k-1,D)+w(y_t)+1 \}.
\end{align}
 \noindent
 If $\Theta(k-1,D)+w(y_t)+1 \leq \Theta(k,D\setminus \{x_t,y_t\}),$ then we have 
 $$\Theta(k-1,D)+w(y_t)+1 \leq \Theta(k, D\setminus \{x_t,y_t\}) = \reg(I(D\setminus \{x_t,y_t\})^k) \leq \reg(I(D)^k) $$
because $D\setminus \{x_t,y_t\}$ is an induced subgraph of $D$. \\ 
Now suppose $\Theta(k,D\setminus \{x_t,y_t\}) < \Theta(k-1,D)+w(y_t)+1 $. Then we will take two subcases.
{\em Subcase 1 :}
Suppose $\reg(M \cap N) \leq \reg(M \oplus N).$ 
Then applying regularity Lemma \ref{lem1}(3) on exact sequence \eqref{eq10} we get that 
\begin{align*}
    \reg(M+N) &\leq \max\{\reg(M\cap N)-1,\reg(M \oplus N\}, \\
              &=\reg(M \oplus N)=\reg(M) \mbox{ ( since } \reg(M\cap N) > \reg(N) ).
\end{align*}
Hence, $$ \reg(J \oplus L)=\reg(J \cap L) = \reg(M+N) \leq \reg(M) $$ 
which implies that $\Theta(k-1,D)+w(y_t)+1 = \reg(L) \leq \reg(M)$ by the virtue of equation \eqref{eq4.2}. Note that from assumption $\reg(M \cap N) \leq \reg(M \oplus N)$, equations \eqref{eq4.4} and \eqref{eq4.5} we obtain $\Theta(k,D\setminus N_D[x_t])+w(y_t)+1 < \reg(M)$. 
 Therefore using equation \eqref{eq4.0}, Lemma \ref{lm3.8} and assumption $\Theta(k,D\setminus \{x_t,y_t\}) < \Theta(k-1,D)+w(y_t)+1 $  we get that
 $$\Theta(k-1,D)+w(y_t)+1 = \reg(M) = \Theta(k,D\setminus \{x_t,y_t\})+1.$$ 
which implies  
 $$ \max\{\Theta(k ,D \setminus \{x_t,y_t\}),\Theta(k,D\setminus N_D[x_t])+w(y_t)\} < \Theta(k-1,D)+w(y_t)+1.$$
\noindent
{ \em Subcase 2:}
  Suppose $\reg(M \oplus N) < \reg(M \cap N)$.  Then applying regularity Lemma \ref{lem1}(6) on exact sequence \eqref{eq10} and using equation  \eqref{eq4.5} we get that
\begin{align*}
 \reg(J \cap L) = \reg(M+N) = \reg(M\cap N)-1=\Theta(k,D\setminus N_D[x_t])+w(y_t)+1.
\end{align*}
Using  $\Theta(k,D\setminus \{x_t,y_t\}) < \Theta(k-1,D)+w(y_t)+1 $, $\reg(J\oplus L)=\reg(J \cap L)$, equations \eqref{eq4.1} and \eqref{eq4.2}, we have 
$ \Theta(k-1,D) + w(y_t) + 1 = \Theta(k,D\setminus N_D[x_t]) + w(y_t) + 1 $
which implies $\max\{\Theta(k,D \setminus \{x_t,y_t\}),\Theta(k,D\setminus N_D[x_t])+w(y_t)\} < \Theta(k-1,D)+w(y_t)+1.$ \\
Thus { \em Subcase 1 } and { \em Subcase 2 } give that
$$\max\{\Theta(k, D \setminus \{x_t,y_t\}),\Theta(k,D\setminus N_D[x_t])+w(y_t)\} <  \Theta(k-1,D)+w(y_t)+1 $$ which further results in $\reg(I(D)^k) \leq  \Theta(k-1,D)+w(y_t)+1 $, by the virtue of equation \eqref{eq15}.
Now we will show that $\reg(I(D)^k) =  \Theta(k-1,D)+w(y_t)+1 .$ \\
Write, $I(D)^k=I_1+I_2 $, where $I_2=(x_t y_t^{w(y_t)})^k$, $\mathcal{G}(I_1)=\mathcal{G}(I(D)^k)\setminus \mathcal{G}(I_2)$ and  $I_1 \cap I_2 = I_2 (I(D\setminus N_D[x_t]) + (z) )$. 
 Further write,
$(I(D)^k)^{\PP}=(I_1)^{\PP}+(I_2)^{\PP}  $, where  $(I(D)^k)^{\PP}$, $I_1^{\PP}$  and $I_2^{\PP}$ are the polarization of $I(D)^k$, $I_1$ and $I_2$ respectively. Thus $(I(D)^k)^{\PP}=(I_1)^{\PP}+(I_2)^{\PP}  $ is Betti splitting by the virtue of \cite[Corollary 2.7]{cha19}. Then from \cite[ Corollary 2.2]{cha19} we have 
$$\reg((I(D)^{\PP}))=\max\{\reg(I_1^{\PP}), \reg(I_2^{\PP}), \reg(I_1^{\PP} \cap I_2^{\PP})-1 \}.$$ 
 Further,
\begin{align*}
\reg(I_1^{\PP} \cap I_2^{\PP}) = & \reg((I_1 \cap I_2 )^{\PP})   \\
                   = & \reg(I_1 \cap I_2) \mbox{ (from Lemma } \ref{lm5}) \\
                   = & k ( w(y_t)+1 )+\Theta(1,D\setminus N_D[x_t]) \\
                   = & (k-1)(w(y_t)+1)+\Theta(1,D\setminus N_D[x_t])+w(y_t)+1 \\
                   = &  \Theta(k-1,D)+w(y_t) +2 \mbox{ (from Lemma } \ref{lm3.9}),
\end{align*}
  equation \eqref{eq15}, Lemma \ref{lm5} and  \cite[Lemma 3.1 ]{h14} result in 
$$ \reg(I_1^{\PP}) \leq \reg((I(D)^k)^{\PP})\leq  \Theta(k-1,D)+w(y_t)+1 $$ and  Lemma \ref{lm3.9} results in $\reg(I_2^{\PP}) \leq \Theta(k-1,D)+w(y_t)+1.$
Hence  using Lemma \ref{lm5} we have  $\reg(I(D)^k)= \Theta(k-1,D)+w(y_t)+1 .$ 
Thus from all the cases, we obtain 
$$ \Theta(k-1,D)+w(y_t)+1 \leq \reg(I(D^k)).$$
\noindent
Now we will prove (\romannum{3}).
Applying regularity Lemma \ref{lem1}(2) on exact sequence  \eqref{eq10}, we get that
$$ \reg(M \cap N) \leq \max\{\reg(M \oplus N), \reg(M+N)+1\}.$$
 Thus from $\reg(M \cap N)=\reg(N)+1 $ and  $ \reg(M+N)=\reg(J \cap L) $ we have
\begin{align*}
     \reg(N)+1 \leq \max\{ \reg(M), \reg(J \cap L) + 1 \}. 
\end{align*}
Now applying regularity Lemma \ref{lem1}(2) on exact sequence \eqref{eq11} we get that
\begin{align*}
         \reg(N)+1  & \leq \max\{\reg(M),\reg(J \oplus L)+1, \reg(I(D)^k)+2\} 
\end{align*}
which implies that
\begin{align*}
     \Theta(k, D\setminus N_D[x_t])+w(y_t)  \leq \max\{\Theta(k-1,D)+w(y_t), \reg(I(D)^k)\} 
\end{align*}
by virtue of equations \eqref{eq4.1}, \eqref{eq4.2}, \eqref{eq4.4} and \eqref{eq4.3}.
Thus by  (\romannum{2}), we get that
 $$ \Theta(k,D\setminus N_D[x_t])+w(y_t) \leq \reg(I(D)^k) .$$
Thus 
\begin{align*}
    \Theta(k,D) = & \max\{\Theta(k,D\setminus \{x_t,y_t\}), \Theta(k,D\setminus N_D[x_t])+w(y_t),                \Theta(k-1,D)+w(y_t)+1\} \\
               \leq & \reg(I(D^k)).
\end{align*}
This proves the theorem.
\noindent
\end{proof}

\begin{corollary}\label{corrol1}
	Let $D$ be an oriented unmixed forest. Let $\{\{x_1,y_1\},\ldots, \{x_r,y_r\}\}$ be a perfect matching in the underlying graph $G$ of $D$. Suppose $y_1,\ldots,y_r$ are sinks. Then for any $k\geq 1$, 
\begin{align*}
  \reg(I(D)^k) \leq (k-1)(w+1) + \reg(I(D))  \mbox{      for any } k \geq 1,
\end{align*}
where $w=\max\{ w(x) : x \in V(D) \}$.
\end{corollary}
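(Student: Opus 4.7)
The plan is to derive the corollary directly from the explicit formulas established in Theorems \ref{thm1} and \ref{thm2}; no further induction or short exact sequence is needed. The key observation is that the two formulas
\[
\reg(I(D)) = \Theta(1,D), \qquad \reg(I(D)^k) = \Theta(k,D)
\]
have the same indexing set of ``non-adjacent matched edges'', and the expression $\Theta(k,D)$ differs from $\Theta(1,D)$ only by an extra additive term of the form $(k-1)(\max_j\{w(y_{i_j})\}+1)$.

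First I would fix an optimal configuration realizing $\Theta(k,D)$: that is, choose a collection of pairwise non-adjacent matched edges $\{x_{i_1},y_{i_1}\},\ldots,\{x_{i_s},y_{i_s}\}$ for which
\[
\Theta(k,D) \;=\; (k-1)\bigl(\max_j\{w(y_{i_j})\}+1\bigr) + \sum_{j=1}^{s} w(y_{i_j}) + 1.
\]
Since $y_{i_j}\in V(D)$ for every $j$, the definition of $w=\max\{w(x):x\in V(D)\}$ immediately gives $\max_j\{w(y_{i_j})\} \leq w$. Substituting this bound into the first summand yields
\[
\Theta(k,D) \;\leq\; (k-1)(w+1) + \sum_{j=1}^{s} w(y_{i_j}) + 1.
\]

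Next I would bound the remaining sum $\sum_{j=1}^{s} w(y_{i_j}) + 1$ by $\Theta(1,D)$. This is automatic, because the very same non-adjacent collection $\{x_{i_1},y_{i_1}\},\ldots,\{x_{i_s},y_{i_s}\}$ is an admissible configuration in the definition of $\Theta(1,D)$, hence $\sum_{j=1}^{s} w(y_{i_j}) + 1 \leq \Theta(1,D) = \reg(I(D))$ by Theorem \ref{thm1}. Combining the two inequalities produces
\[
\reg(I(D)^k) \;=\; \Theta(k,D) \;\leq\; (k-1)(w+1) + \reg(I(D)),
\]
which is the assertion of the corollary.

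Because each step is a direct consequence of either the definition of $\Theta(k,D)$ or one of Theorems \ref{thm1} and \ref{thm2}, there is no substantive obstacle; the only care needed is to ensure that the same configuration of non-adjacent matched edges is used on both sides of the inequality so that the bound $\sum_{j} w(y_{i_j}) + 1 \leq \Theta(1,D)$ applies without modification.
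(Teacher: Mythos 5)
Your proposal is correct and follows essentially the same route as the paper: both reduce the corollary to the elementary inequality $\Theta(k,D) \leq (k-1)(w+1) + \Theta(1,D)$ together with Theorems \ref{thm1} and \ref{thm2}. The only difference is that you spell out the verification of that inequality (fixing an optimal configuration and reusing it for $\Theta(1,D)$), which the paper leaves as an ``observe''.
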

\begin{proof}
Observe that $\Theta(k,D) \leq (k-1)(w+1) + \Theta(1,D) $, where $w=\max\{ w(x) : x \in V(D) \}$ and from Theorem \ref{thm1} we have 
$\reg(I(D)) =  \Theta(1,D) $. Thus the Theorem \ref{thm2} gives that
\begin{align*}
\reg(I(D)^k)=\Theta(k,D) \leq (k-1)(w+1) + \Theta(1,D)=  (k-1)(w+1) + \reg(I(D)), 
\end{align*}
 for any $k \geq 1$. Thus 
 \begin{align*}
  \reg(I(D)^k) \leq (k-1)(w+1) + \reg(I(D))  \mbox{      for any } k \geq 1. 
\end{align*}
 \end{proof}

\begin{remark}
Note that the formula for $\reg(I(D)^k)$ in the Theorem \ref{thm2} is also equal to the following symmetric formula, 
\begin{align*}
        \max \left\{(w(y_{i_l})+1) k + \sum_{j=1,j \neq l}^s w(y_{i_j}) ~:\text{ none of the edges } \{x_{i_j},y_{i_j}\} \text{ are adjacent }, y_{i_j} \in V(D)  \right \}
\end{align*}
for any $k\geq 1$. 
\end{remark}

\begin{example}
	Let $D$ be a Cohen-Macaulay weighted oriented graph with all leaves are sinks as below.
\begin{figure}[!h]
    \centering     \includegraphics[width=0.25 \textwidth]{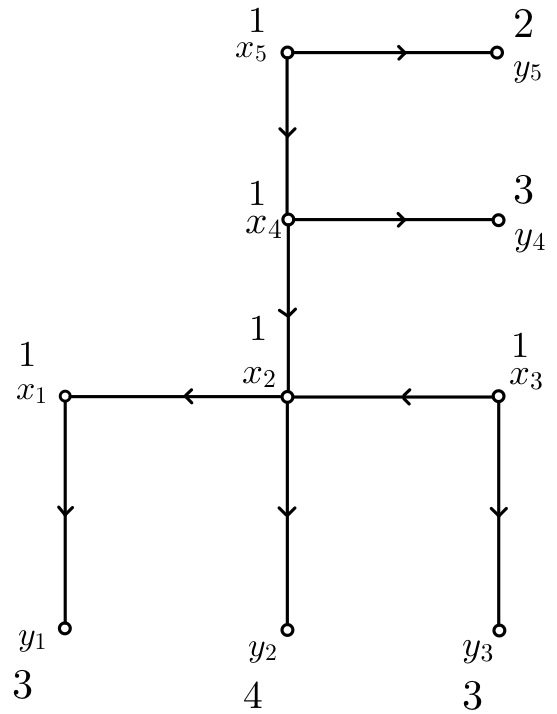} 
    \caption{Cohen-Macaulay weighted oriented forests}
    \label{fig:my_label}
\end{figure} \\
	The weight of vertices are $ w(x_i) = 1 $, for all $1\leq i \leq 5 $, $w(y_1)=3$, $w(y_2)=4$, $w(y_3)=3$, $w(y_4)=3$, $w(y_5)=2$. Then the edge ideal of $D$ is 
	$$ I(D)=(x_1x_2, x_2x_3, x_2x_4, x_4x_5, x_1y_1^3, x_2y_2^4, x_3y_3^3, x_4y_4^3, x_5y_5^2). $$
By the Theorem \eqref{thm2}, for any  $k \geq 1$,
\begin{equation*} 
\begin{split}
\reg(I(D)^k) = & \max \Bigg\{(\max\{w(y_1),w(y_3),w(y_4)\}+1)(k-1)+w(y_1)+w(y_3)+w(y_4)+1, \\    & ~~~(\max\{w(y_1),w(y_3),w(y_5)\}+1)(k-1)+ w(y_1)+w(y_3) +w(y_5)+1, \\ 
      & ~~~ (\max\{w(y_2),w(y_5)\}+1)(k-1)+ w(y_2)+w(y_5)+1 \}   \Bigg \} \\ 
      = & \max \{4(k-1)+10,~ 4(k-1)+9, ~5(k-1)+7 \}\\
      = & \max \{4(k-1)+10, ~ 5(k-1)+7\}.  
\end{split}      
\end{equation*}

This implies that $\reg(I(D)^k)=4(k-1)+10$, for $1 \leq k \leq 4$, and  $\reg(I(D)^k)=5(k-1)+10$, 
for all $k\geq 4$. 
\end{example}
The formula in Theorem \eqref{thm2}  does not hold for Cohen-Macaulay weighted oriented forests with a leaf is not sink.
\begin{example}
Let $D$ be a Cohen-Macaulay weighted oriented graph as given below.
\begin{figure}[!h]
\centering
    \includegraphics[width=0.3 \textwidth]{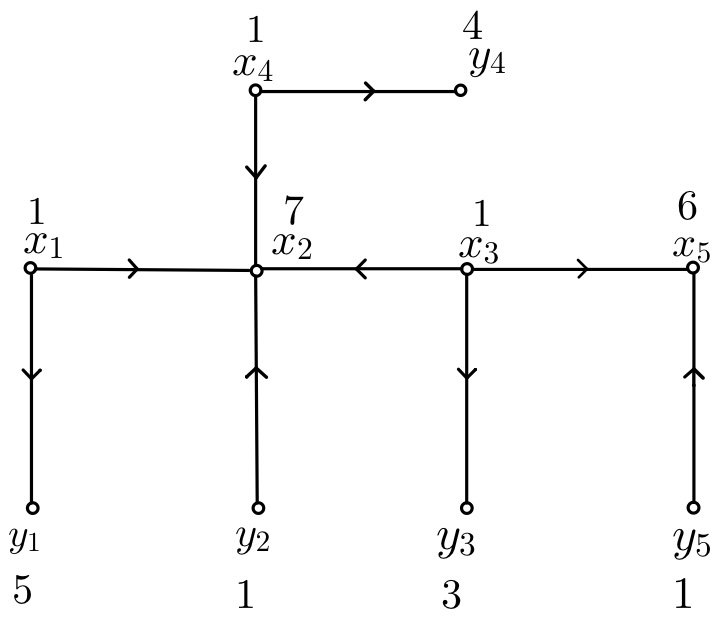} 
    \caption{Cohen-Macaulay weighted oriented forests}
    \label{fig:my_label}
\end{figure}\\
The weight of vertices are $w(x_i)=1$, for  $i=1,3,4$, $w(x_2)=7, w(x_5)=6$, $w(y_1)=5, w(y_3)=3, w(y_4)=4$ and $w(y_i)= 1$ for $i=2,5$.Then the edge ideal of $D$ is 
	$$ I(D)=(x_1x_2^7, x_2^7x_3,x_4x_2^7,x_4y_4^4, x_1y_1^4, y_2x_2^7, x_3y_3^3,y_5x_5^6,x_4y_4^4).$$
Consider for any  $k \geq 1$,
\begin{equation*} 
\begin{split}
\Theta(k,D) =& \max \Bigg\{(\max\{w(y_1),w(y_3),w(y_4)\}+1)(k-1)+w(y_1)+w(y_3)+w(y_4)+1, \\    & ~~~(\max\{w(y_1),w(y_5)\}+1)(k-1)+ w(y_1) +w(y_5)+1, \\ 
      & ~~~ (\max\{w(y_2),w(y_5)\}+1)(k-1)+ w(y_2)+w(y_5)+1 \} \\
      & ~~~ (\max\{w(y_4),w(y_5)\}+1)(k-1)+ w(y_4)+w(y_5)+1 \}     \Bigg \} \\ 
      &= \max \{6(k-1)+11,~ 6(k-1)+7, ~2(k-1)+3 , ~5(k-1)+6 \}\\
      &= 6(k-1)+11.  
\end{split}      
\end{equation*} 
This implies that $ \Theta(k,D) = 6(k-1) + 11 $, for $k \geq 1$. Now, using Macaulay $2$  \cite{gs} we have $ \reg(I(D))=24 \neq \Theta(1,D) = 11 $, $ \reg(I(D)^2)=31 \neq \Theta(2,D) = 17 $.
\end{example}
\vskip 0.3cm 
\noindent 
{\bf Acknowledgement:} 
Manohar Kumar is thankful to the Government of India for supporting him in this work through the Prime Minister Research Fellowship.
\vskip 0.3cm 
\noindent 
{\bf Data availability statement} Data sharing is not applicable to this article as no datasets were generated or analyzed during the current study.

\end{document}